\makeatletter \@addtoreset{equation}{section}
\newcommand{\eps}{\varepsilon}
\newcommand{\ro}{\varrho}
\newcommand{\rn}[1]{{\mathbb R}^{#1}}
\newcommand\R{{\rn{}}}
\newcommand\Z{{\mathbb Z}}
\newcommand{\segop}{[\kern-2pt[}
\newcommand{\segcl}{]\kern-2pt]}
\newcommand{\Leb}[1]{{\mathscr L}^{#1}}
\newcommand{\Haus}[1]{{\mathscr H}^{#1}}
\newcommand{\res}{\mathop{\hbox{\vrule height 7pt width .5pt depth 0pt
\vrule height .5pt width 6pt depth 0pt}}\nolimits}
\newcommand{\dist}{\mbox{dist}}
\newcommand{\modp}{{\rm mod}(p)}
\newcommand{\modtwo}{{\rm mod}(2)}
\newcommand{\fflat}{{\mathscr F}}
\newcommand{\rc}[2]{{\mathcal I}_{{#1}}({#2})}      
\newcommand{\rcp}[2]{{\mathcal I}_{p,{#1}}({#2})}   
\newcommand{\fc}[2]{{\mathscr F}_{{#1}}({{#2}})}
\newcommand{\fcstar}[2]{{\mathscr F}^*_{{#1}}({{#2}})}
\newcommand{\fcp}[2]{{\mathscr F}_{p,{#1}}({{#2}})}
\newcommand{\fcpstar}[2]{{\mathscr F}^*_{p,{#1}}({{#2}})}
\newcommand{\ic}[2]{{\bf I}_{{#1}}({#2})}       
\newcommand{\icp}[2]{{\bf I}_{p,{#1}}({#2})}    
\newcommand{\iicp}[2]{{\bf C}_{p,{#1}}({#2})}    
\newtheorem{theorem}{Theorem}[section]
\newtheorem{lemma}[theorem]{Lemma}
\newtheorem{proposition}[theorem]{Proposition}
\newtheorem{corollary}[theorem]{Corollary}
\theoremstyle{definition}
\newtheorem{definition}[theorem]{Definition}
\newtheorem{remark}[theorem]{Remark}
\numberwithin{equation}{section}
\numberwithin{figure}{section}
\numberwithin{table}{section}
\title[Flat currents and filling radius]{Flat currents modulo~$p$ in
metric spaces and filling radius inequalities}
\author{ Luigi Ambrosio}
\address{Scuola Normale Superiore, Piazza Cavalieri 7, 56100 Pisa, Italy}
\email{l.ambrosio@sns.it}
\author{ Mikhail G. Katz}
\address{Department of Mathematics, Bar Ilan University, Ramat Gan
52900, Israel}
\email{katzmik@math.biu.ac.il}
\subjclass[2000]{Primary 
49Q15;            
Secondary 
28A75,            
53C23
}
\keywords{Ekeland principle, filling radius, filling volume, Gromov's
systolic inequality, isoperimetric inequalities, Federer-Fleming
theory}
\date{\today}
\begin{document}

\maketitle

\begin{abstract}
We adapt the theory of currents in metric spaces, as developed by the
first-mentioned author in collaboration with B.~Kirchheim, to currents
with coefficients in~$\Z_p$.  Building upon S.~Wenger's work in the
orientable case, we obtain isoperimetric inequalities~$\modp$ in
Banach spaces and we apply these inequalities to provide a proof of
Gromov's filling radius inequality (and therefore also the systolic
inequality) which applies to nonorientable manifolds, as well. With
this goal in mind, we use the Ekeland principle to provide
quasi-minimizers of the mass $\modp$ in the homology class, and use
the isoperimetric inequality to give lower bounds on the growth of
their mass in balls.
\end{abstract}

\tableofcontents


Our aim is the extension of the theory of rectifiable currents in
metric and infinite-dimensional Banach spaces to the case of
coefficients in~$\Z_p$.  Such an extension can be applied to give
transparent proofs of Gromov's filling radius and filling volume
inequalities which apply to nonorientable manifolds, as well.


\section{Current history}

Following the classical paper by H. Federer and
W.~Fleming~\cite{fedfle}, as well as Federer's treatise \cite{federer}
on the theory of currents, in the last few years the theory has
undergone two important developments:

\begin{itemize}
\item[{--}] B. White's theory \cite{white}, inspired by Fleming's
paper \cite{Fl}, of rectifiable flat chains with coefficients in a
general group, in Euclidean spaces;
\item[{--}] the theory developed by the first author and B. Kirchheim
in \cite{ak2}, and inspired by E. De Giorgi~\cite{dg1}, of real and
integer rectifiable currents in general metric spaces.
\end{itemize}

A unified picture (general coefficients in general spaces) seemed to
be still missing, but after the completion of this paper we knew about
the paper by T.De Pauw and R.Hardt \cite{depauw2} and the earlier
paper by T.Adams \cite{adams}, developed in the same spirit of the
Fleming-White theory (but with no discussion of isoperimetric
inequalities).  Another valuable contribution to the literature came
even more recently with S.~Wenger's papers \cite{We}, \cite{We1} on
the isoperimetric inequalities.  The classical approach \cite{fedfle}
to proving these inequalities in arbitrary dimension and codimension
goes back to the deformation theorem.  A different technique was
introduced by M.~Gromov~\cite{Gr1} and fully exploited in \cite{ak2}.
It is based on the fact that, in finite-dimensional spaces, one can
prove isoperimetric inequalities independent not only of the
codimension, but also of the norm in the space.  Such a technique
allows one to prove the inequality in suitable metric spaces and in
infinite-dimensional spaces, provided a finite-dimensional
approximation scheme exists.

Wenger \cite{We} introduced a new ``global'' technique, based on
covering arguments and independent of deformation theorems and
finite-dimensional schemes.  His technique allows one to treat also
the case of Banach spaces to which the results in~\cite{ak2} do not
apply.  White's isoperimetric inequality \cite{white2} applies to
chains in finite-dimensional Banach spaces with coefficients in
general groups.  However, White's inequality is based on the
deformation theorem in the corresponding Euclidean space, and
therefore does not provide universal constants depending only on the
dimension of the chain.

In the present text, we follow the approach of \cite{federer} (see
also W.~Ziemer \cite{Z} for the case~$p=2$, still in Euclidean space)
to achieve an extension of the metric theory of \cite{ak2} to currents
with coefficients in~$\Z_p$: the initial idea is simply to identify
currents which differ by~$pT$, with~$T$ integer rectifiable.  But
then, since we want this equivalence to be stable under the action of
the boundary operator, it turns out that larger equivalence classes
and a suitable topology (induced by the so-called flat distances) are
needed. In any case, our currents arise as quotient classes~$[T]$ of
currents~$T$ akin to those considered in \cite{ak2}, which extend to
general spaces those of the Federer-Fleming theory.

\newcommand{\class}{\left[ \segop M \segcl_{\phantom{I\!\!\!\!}}
^{\phantom{I\!\!\!\!}} \right]}

In the simplest case~$p=2$, it is well-known that one can use currents
modulo~$2$ to describe possibly nonorientable manifolds. In
particular, we will prove in Theorem~\ref{ttanaka} that to any
compact~$n$-dimensional Riemannian manifold without boundary~$M$ one
can associate a canonical equivalence class
\[
\class
\]
(notice that the current~$\segop M\segcl$ itself is by no means
canonical) whose boundary is zero, still~$\modtwo$.  In particular,
after embedding~$M$ in a linear space, we can consider chains whose
boundary~$\modtwo$ coincides with the image of~$\segop M\segcl$.

\section{Gromov's inequalities}

A quarter century ago, M.~Gromov \cite{Gr1} initiated the modern
period in systolic geometry by proving a curvature-free~$1$-systolic
lower bound for the total volume of an essential Riemannian
manifold~$M$ of dimension~$n$.  Recall that the~$1$-systole, denoted
``Sys'', of a space is the least length of a loop that cannot be
contracted to a point in the space.  Here the term ``curvature-free''
refers to a bound independent of curvature invariants, with a constant
depending on the dimension of~$M$ (and possibly on the topology
of~$M$), but not on its geometry.  Such a bound is given by the
inequality between the leftmost and the rightmost terms in \eqref{21}
%
%
below, and can be thought of as a far-reaching generalisation of
Loewner's classical torus inequality
\begin{equation}
\label{21b}
{\rm Sys}^2 \leq \frac{2}{\sqrt{3}} {\rm Area},
\end{equation}
satisfied by every metric on the~$2$-torus, cf. \cite{Pu}.  It is
conjectured that the bound \eqref{21b} is satisfied by every surface
of negative Euler characteristic, see \cite{KK} for a detailed
discussion.  Recent publications in systolic geometry include
\cite{e7, Be08, Bru, Bru2, Bru3, EL, KK, Ka4, KR2, Sa08, HKK, KK2,
Gu09}.

The main ingredient in the proof of the inequality is Gromov's filling
inequality.  There is a certain amount of confusion in the literature
as to what constitutes Gromov's ``filling inequality''.  Gromov
actually proved several inequalities:

\begin{enumerate}
\item[{--}]
an inequality relating the filling radius and the volume.  It is
this inequality that's immediately relevant to Gromov's systolic
inequality;
\item[{--}] the inequality between the filling volume
(an~$(n+1)$-dimensional invariant) and the volume ($n$-dimensional
invariant) of~$M$.  Such an inequality can be more appropriately
referred to as an isoperimetric inequality.
\end{enumerate}

Marcel Berger performed a great deal of propaganda for systolic
geometry (see most recently \cite{Be6, Be08}).  The success of the
field is certainly due to his efforts.  In one of his popularisation
talks, he presented the following string of three inequalities:
\begin{equation}
\label{21}
{\rm Sys} \leq 6\; {\rm Fillrad} \leq Const \cdot \; {\rm
FillVol}^{1/(n+1)} \leq Const \cdot \; {\rm Vol}^{1/n}.
\end{equation}
(Here the last inequality corresponds to the isoperimetric inequality,
while the first one is sharp \cite{Ka1}.)  Berger's presentation was
intended for pedagogic purposes, but eventually led to a slight
confusion.  Namely, this string of inequalities gave the impression
that the proof breaks up into three stages, each requiring separate
treatment.  In reality, the last two inequalities are proved
simultaneously.  The technique is essentially a more precise version
of Federer-Fleming's deformation theorem.

As a matter of fact, proving the isoperimetric inequality alone does
not {\em directly\/} lead to any simplification of the proof.
Consider, for example, the familiar picture of the pseudosphere
in~$\R^3$, with a cusp along an asymptote given by the~$z$-axis.  We
think of it as a ``filling" of the unit circle in the~$(x,y)$-plane.
Alternatively, truncate the pseudosphere at large height~$z=H$, to
obtain a filling which is topologically a disk.  One immediately
realizes that the {\em filling volume\/} stays uniformly bounded, but
the {\em filling radius\/} (with respect to this particular filling)
tends to infinity.

Gromov's original proof starts by imbedding the manifold~$M$ into the
space~$L^\infty(M)$ of bounded Borel functions on~$M$.  Here a point
$x\in M$ is sent to the function~$f_x$ defined by
\begin{equation}
f_x(y) = \dist(x,y),
\end{equation}
where ``$\dist$'' is the Riemannian distance function in~$M$.  The
fact that the space~$L^\infty(M)$ is infinite-dimensional has given
some readers the impression that infinite-dimensionality of the
imbedding is an essential aspect of Gromov's proof of the systolic
inequality.  In fact, this is not the case. Indeed, we can choose a
maximal~$\epsilon$-net~$N \subset M$ with~$|N| < \infty$ points.  We
choose~$\epsilon$ satisfying~$\epsilon < \frac{1}{10} {\rm sys} (M)$.
This results in an imbedding
\begin{equation}
\label{23}
M \to \ell^\infty(N)
\end{equation}
where the systole goes down by a factor at most~$5$, see
\cite[p.~97]{SGT}.  Thus the systolic problem can easily be reduced to
finite-dimensional imbeddings.  Similarly, by choosing a sufficiently
fine~$\epsilon$-net, one can force the map \eqref{23} to be
$(1+\epsilon)$--bi-Lipschitz, for all~$\epsilon>0$ (see \cite{KK2} and
Proposition~\ref{NSA} below).  Hence finite-dimensional approximations
work well for our filling radius, as well, provided the estimates one
proves are independent of~$N$.

Gromov's original proof is difficult (a recent generalisation is
provided by L.~Guth in~\cite{Gu}; see also \cite{Gu09} and \cite{KW}).
Only the experts possess a complete understanding of the proof.  It
would thus be desirable to write down a detailed proof of Gromov's
influential theorem, and to sort out some of the confusion in the
literature.

\section{Summary of main results}

In Section~\ref{four}, we introduce flat currents and flat currents
modulo~$p$, following the traditional procedure in \cite{Z},
\cite{federer}.  The only difference is that the initial objects we
complete with respect to the flat topology are the currents
of~\cite{ak2}, whose main properties are recalled in the appendix.
Then, we see that in this class a slice operator
\[
[T]\mapsto \langle [T],u,r\rangle
\]
and a boundary operator~$[T]\mapsto\partial [T]$ are well defined.
This allows us to state a list of properties that a suitable classe of
currents, together with a suitable notion of mass, should satisfy, as
in \cite{We1}, in order to obtain the isoperimetric inequality.  The
idea is to start from the 1-dimensional isoperimetric inequality,
which needs to be directly checked, and then make a bootstrap argument
based on a clever covering argument.  Actually, as in \cite{We}, we
use the covering argument even to estabilish the 1-dimensional
isoperimetric inequality (trivial in the case of Lipschitz images of
1-dimensional simplexes considered in \cite{We1}, but not trivial in
our case). Then, we show in Section~5 and Section~6 that our class of
currents, together with a suitable notion of~$p$-mass, denoted
by~${\bf M}_p$, do satisfy the list of properties, so that an
isoperimetric inequality holds in this class.

\begin{definition}
The filling radius
\[
r([L],M)
\]
of a~$n$-dimensional cycle~$\modtwo$ in a space~$M$ is the infimum of
the numbers~$r>0$ such that, for all Banach spaces~$F$ and all
isometric embedding~$i$ of~$M$ into~$F$ there exists a~$(n+1)$
current~$[T]$~$\modtwo$ in~$F$ such that~$\partial [T]=i_\sharp [L]$
and the support of~$[T]$ is contained in the~$r$-neighbourhood of the
support of~$i_\sharp [L]$.
\end{definition}

Of course this definition makes sense only specifying the cycles we
are dealing with: they are equivalence classes~$\modtwo$ of currents
$L\in\rc{n}{E}$ whose boundary is zero, still~$\modtwo$.
Analogously, the admissible fillings~$T$ are equivalence classes
$\modtwo$ of currents in~$\rc{n+1}{E}$ whose boundary is equivalent
$\modtwo$ to~$L$ (see Section~\ref{sretti} for a precise definition
of the additive group~$\rc{n}{E}$ of integer rectifiable
$n$-currents in~$E$).

One of the main result of our paper, achieved as a
particular case of our Theorem~\ref{tkatz1} below, is the universal
upper bound
\[
r([L],M)\leq c(n)\bigl[{\bf M}_2([L])\bigr]^{1/n}.
\]
When~$M$ is a compact Riemannian manifold without boundary, applying
this result to the canonical~$n$-cycle~$[L]=\class$ in~$M$ and
setting
\begin{equation}
\label{41}
r(M)=r(M,\class)
\end{equation}
we obtain the following result.
\begin{theorem}\label{tmain}
For any compact~$n$-dimensional Riemannian manifold without boundary
the universal upper bound~$r(M)\leq c(n)\bigl[{\rm
Vol}(M)\bigr]^{1/n}$ holds.
\end{theorem}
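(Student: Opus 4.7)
The plan is to deduce Theorem~\ref{tmain} as an immediate corollary of the general filling radius inequality of Theorem~\ref{tkatz1}, applied to the canonical cycle $\class$ furnished by Theorem~\ref{ttanaka}. That inequality reads
\[
r(\class,M) \leq c(n)\,\bigl[{\bf M}_2(\class)\bigr]^{1/n},
\]
so in view of the definition \eqref{41} it suffices to establish the mass estimate ${\bf M}_2(\class) \leq {\rm Vol}(M)$.

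To bound the $2$-mass from above, I would exhibit an integer rectifiable representative $L\in\rc{n}{M}$ of $\class$ whose ordinary mass equals the Riemannian volume. Since $M$ is smooth and compact, fix a smooth triangulation $\{\sigma_i\}_{i=1}^N$ of $M$, orient each top-dimensional simplex $\sigma_i$ arbitrarily, and set $L:=\sum_{i=1}^N \segop\sigma_i\segcl$, where $\segop\sigma_i\segcl$ denotes integration against the oriented simplex. Each codimension-one face is shared by exactly two top-simplices, so in the $\Z$-boundary $\partial L$ it appears with coefficient in $\{-2,0,2\}$; hence $\partial L\equiv 0\pmod 2$, and $[L]$ is an admissible cycle $\modtwo$. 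A local comparison with the construction in Theorem~\ref{ttanaka} identifies $[L]$ with $\class$. Since the simplices overlap only on $\Haus{n}$-negligible sets, the area formula yields ${\bf M}(L) = \sum_i \Haus{n}(\sigma_i) = {\rm Vol}(M)$, and therefore
\[
{\bf M}_2(\class) \leq {\bf M}(L) = {\rm Vol}(M).
\]
Combining this with Theorem~\ref{tkatz1} yields the desired universal bound $r(M)\leq c(n)\bigl[{\rm Vol}(M)\bigr]^{1/n}$.

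The main obstacle I anticipate is the identification step: checking that the explicit triangulated cycle $[L]$ constructed above really coincides with the canonical class $\class$ produced by Theorem~\ref{ttanaka}. Morally both objects represent the fundamental class~$\modtwo$ of $M$, and the arbitrary choices of triangulation and of local orientations are precisely what are erased by reduction~$\modtwo$; making this rigorous will amount to matching the two constructions chart by chart and appealing to the uniqueness portion of Theorem~\ref{ttanaka}. Once this identification is secured, the remainder of the argument reduces to the area formula and a direct invocation of Theorem~\ref{tkatz1}, and requires no further geometric input from the Riemannian structure.
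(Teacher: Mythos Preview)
Your overall strategy coincides with the paper's: invoke Theorem~\ref{ttanaka} for the canonical cycle $\class$ and then apply Theorem~\ref{tkatz1}. The difference is that you take an unnecessary detour through a triangulation in order to bound ${\bf M}_2(\class)$. The paper simply observes that the representative $\segop M\segcl$ built from \emph{any} Borel orientation (as in the discussion preceding Theorem~\ref{ttanaka}) already has multiplicity $\pm 1$ everywhere, so that $\|\segop M\segcl\|=\Haus{n}$ and $\segop M\segcl$ is its own reduction $\modtwo$; by Theorem~\ref{teosabato} this yields ${\bf M}_2(\class)={\bf M}(\segop M\segcl)={\rm Vol}(M)$ directly. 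Thus the identification step you flag as the main obstacle never arises: there is no need for a second representative, and no ``uniqueness portion'' of Theorem~\ref{ttanaka} is required (indeed, that theorem asserts no uniqueness).

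One genuine omission in your write-up: Theorem~\ref{tkatz1} is formulated for cycles in a compact \emph{convex} subset $E$ of a separable Banach space, not for cycles in $M$ itself. The paper bridges this by taking an arbitrary isometric embedding $i:M\to F$, letting $E$ be the closed convex hull of $i(M)$ (compact because $i(M)$ is), applying Theorem~\ref{tkatz1} to $i_\sharp\class\in{\bf I}_{2,n}(E)$, and using the isometric invariance of the ${\bf M}_2$-mass of rectifiable currents to get ${\bf M}_2(i_\sharp\class)={\rm Vol}(M)$. Since the definition of $r(M)$ quantifies over all isometric embeddings, this passage is what actually connects Theorem~\ref{tkatz1} to the conclusion, and you should make it explicit.
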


\begin{remark}
\label{33}
Up until the proof of the isoperimetric inequalities no completeness
of our spaces of currents is really needed (closure under the action
of the slicing operator suffices). However, the proof of the universal
upper bound seems really to require some form of completeness, and
justifies the whole mathematical apparatus developed in this paper
(however, we left out many mathematical questions concerning currents
with coefficients in~$\Z_p$ that we plan to investigate in the
forthcoming paper \cite{ambkatzwen}). In order to prove our result we
use as in \cite{ak2} the Ekeland principle (valid in complete metric
spaces, see Section~\ref{sekeland} for a precise statement) to find
``quasi-minimizers'' of the~${\bf M}_p$-mass in the homology class
\[
\{[T]:\ \partial [T]=i_\sharp [L]\}
\]
and prove, using the isoperimetric inequality, that any such minimizer
has support close to the support of~$i_\sharp [L]$. Notice also that
the same argument, based on the isoperimetric inequalities, applies to
orientable manifolds: in this case the filling radius invariant
(possibly a larger one) could also be defined using the currents
in~\cite{ak2} and no quotient~$\modp$ is needed.
\end{remark}

\section{Filling radius and systole}

The invariant defined in~\eqref{41} is related to the systole by means
of the following inequality of Gromov's \cite{Gr1}, which turns out to
be sharp \cite{Ka1}.  Recall that a closed manifold~$M$ is called {\em
essential\/} if it admits a continuous map an Eilenberg-MacLane space
$K(\pi,1)$ such that the induced homomorphism in top-dimensional
homology sends the fundamental homology class of~$M$ to a nonzero
class.

\begin{theorem}[M.~Gromov]
\label{16}
Every essential~$M$ satisfies~$r(M) \geq \frac{1}{6}{\rm Sys}(M)$.
\end{theorem}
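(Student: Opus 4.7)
The plan is to argue by contradiction, adapting Gromov's original scheme to the framework of flat mod~$2$ currents developed here. Suppose $r(M) < s/6$ with $s = \mathrm{Sys}(M)$, and pick an auxiliary $r$ with $r(M) < r < s/6$. Embed $M$ isometrically as $i\colon M \hookrightarrow F := L^\infty(M)$ via the Kuratowski map. By the very definition of the filling radius there exists a flat $(n+1)$-current mod~$2$, say $[T] \in \fcp{n+1}{F}$, with $\partial[T] = i_\sharp\class$ and $\mathrm{spt}([T]) \subset U_r(i(M))$.

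My first step is to obtain a polyhedral approximation of $[T]$: I want to replace it by a simplicial chain $P$ mod~$2$, still supported inside $U_r(i(M))$, whose simplices all have diameter less than a prescribed $\delta > 0$, and with $\partial P$ a fine polyhedral approximation of $i_\sharp\class$. Since $[T]$ arises by flat completion of rectifiable currents, and since by Proposition~\ref{NSA} one can first pull the whole situation into some $\ell^\infty(N)$ via a $(1+\eps)$-bi-Lipschitz embedding, one may invoke a mod~$2$ Federer--Fleming style deformation inside $\ell^\infty(N)$ to produce such a $P$ with mesh $\delta$ arbitrarily small, at the cost of only a controlled enlargement of the support.

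Next I build a retraction of $|P|$ onto $M$ skeleton by skeleton. On each vertex $v$ pick a nearest point $\pi(v)\in M$, so that $\|v - i(\pi(v))\|_F \leq r$. On an edge $[v,w]$ the isometry of $i$ yields
\[
d_M(\pi(v),\pi(w)) = \|i(\pi(v)) - i(\pi(w))\|_F \leq 2r + \delta,
\]
so I connect $\pi(v)$ and $\pi(w)$ by a length-minimising geodesic in $M$. On a $2$-simplex the image of the boundary is a loop in $M$ of length at most $6r + 3\delta$, which is strictly less than $s$ once $\delta$ is small, hence null-homotopic, and so the map extends continuously across the simplex. For all higher-dimensional simplices I compose with the classifying map $\varphi\colon M \to K(\pi,1)$; since $\pi_k(K(\pi,1)) = 0$ for $k \geq 2$, standard obstruction theory produces the extension across each higher simplex. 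The resulting continuous map $\Phi\colon |P| \to K(\pi,1)$ restricted to $|\partial P|$ is homotopic to $\varphi$ under the natural identification of $|\partial P|$ with a fine polyhedral model of $i(M)$; consequently $\varphi_\sharp\class$ bounds in $H_n(K(\pi,1);\Z_2)$, contradicting the essentiality of $M$.

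The main obstacle will be the polyhedral approximation in the first step: converting an abstract flat mod~$2$ filling $[T]$ into a genuine simplicial chain of arbitrarily small mesh, while keeping its support inside the $s/6$-neighbourhood of $i(M)$ and its boundary close to $i_\sharp\class$. The flat-completion machinery of the paper guarantees approximation only in the flat distance; upgrading this to a geometric polyhedral object with a controlled mesh requires either a carefully executed mod~$2$ version of the classical deformation theorem inside $\ell^\infty(N)$, or an ad~hoc nerve construction built on a fine $\eps$-net in $\mathrm{spt}([T])$. Once this approximation is in hand, the skeleton-by-skeleton retraction and the essentiality contradiction are entirely classical.
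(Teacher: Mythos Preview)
Your skeleton-by-skeleton retraction and the appeal to essentiality via the classifying map are exactly the paper's argument. The divergence is in your first step, where you try to replace the flat mod~$2$ filling $[T]$ by a genuine polyhedral chain $P$ of small mesh, and you correctly flag this as the main obstacle. The paper simply bypasses this difficulty. After passing to a finite-dimensional $\ell^\infty(N)$ via Proposition~\ref{NSA}, it fixes a fine triangulation of the ambient linear space that \emph{extends} a triangulation of (the image of) $M$, and sets $C_{\rm fat}$ equal to the union of all closed simplices meeting the support of the filling. This $C_{\rm fat}$ is an honest finite simplicial complex containing $M$ as a subcomplex; the retraction $f^{(2)}$ is built on its $2$-skeleton with $f^{(2)}|_{M^{(2)}}={\rm id}$, and the extension to $h:C_{\rm fat}\to B\pi$ with $h|_M=g$ follows from Lemma~\ref{42b}. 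Since any filling supported in $C_{\rm fat}$ forces $[M]=0$ in $H_n(C_{\rm fat};\Z_2)$, one gets $g_*[M]=h_*[M]=0$, contradicting essentiality. No deformation theorem and no polyhedral approximation of the current itself are needed---only a triangulation of a neighbourhood of its support.

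Your route has a second, subtler issue you did not flag: even granting a polyhedral $P$ with $\partial P$ flat-close to $i_\sharp\class$, the carrier $|\partial P|$ need not be homeomorphic to $M$ (a deformation onto a cubical grid can change the topology entirely), so the ``natural identification of $|\partial P|$ with a fine polyhedral model of $i(M)$'' that you invoke to compare $\Phi|_{|\partial P|}$ with $\varphi$ is not available in general. The paper's device---making $M$ itself a subcomplex of the ambient triangulation and insisting that the maps $f^{(k)}$ restrict to the identity on $M$---is precisely what makes the homological comparison clean.
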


\begin{proof}
The idea of Gromov's proof is to build a retraction
skeleton-by-skeleton.  We will outline the essential idea of the
argument first, so as not to overburden the presentation with
technical details, which will be explained later.

By a {\em strongly isometric imbedding\/} we mean an imbedding of
metric spaces~$M\to V$ such that the instrinsic distance in~$M$
coincides with the ambient distance in~$V$ among points of~$M$.

%
%
We can assume without loss of generality that a piecewise linear
strongly isometric (up to epsilon) imbedding~$M \to \ell^\infty$
satisfies~$\dim(\ell^\infty) < \infty$ (see Remark~\ref{43} and
Proposition~\ref{51}).  If~$6r(M) < {\rm Sys}$, we set
\begin{equation}
\label{41b}
\epsilon = \frac{1}{10}( {\rm Sys} - 6r(M)).
\end{equation}
Consider a triangulation, extending that of (the image of)~$M$,
of~$\ell^\infty$ so each simplex has diameter at most~$\epsilon$.
If~$C$ is a current with support in the neighborhood~$U_r M$,
let~$C_{{\rm fat}}$ be the union of all simplices meeting the support
of~$C$.  Then~$C_{{\rm fat}}$ lies in the~$(r+\epsilon)$-neighborhood
of~$M$.  Let
\[
C_{{\rm fat}}^{(k)} \subset C_{{\rm fat}}
\]
be its~$k$-skeleton.  A map
\[
f^{(0)}: C_{{\rm fat}}^{(0)} \to M
\]
on the $0$-skeleton is constructed by sending each vertex to a nearest
point of~$M$.  Next, we extend~$f^{(0)}$ to a map
\[
f^{(1)}: C_{{\rm fat}}^{(1)} \to M
\]
by sending each edge to a shortest path joining the images of its
endpoints under~$f^{(0)}$, in such a way that~$f^{(1)}$ is the
identity on each edge contained in~$M$ itself (here we are assuming
that the edges of the triangulation of~$M$ are minimizing paths).
Since the distances in~$M$ coincide with the ambient distances in
$\ell^\infty$, each edge of~$C_{{\rm fat}}^{(1)}$ is mapped to a path
of length at most~$(r+\epsilon) + \epsilon + (r+ \epsilon) = 2r +
3\epsilon$.  Next, given a~$2$-simplex~$abc$ in~$C_{{\rm fat}}^{(2)}$,
note that its boundary is mapped to a loop~$L_{abc}$ of length at most
\[
3(2r + 3\epsilon) = 6r + 9\epsilon < {\rm Sys},
\]
by~\eqref{41b}, and hence~$L_{abc}$ is contractible by definition of
the systole.  We can therefore extend~$f^{(1)}$ to a map
\[
f^{(2)} : C_{{\rm fat}}^{(2)} \to M
\]
whose restriction to the intersection~$M^{(2)}\cap C_{{\rm fat}}$ is
the identity.  Every essential manifold~$M$ (see \cite{Gr1}) by
definition admits a classifying map
\[
g: M \to B\pi
\]
to the classifying space~$B\pi = K(\pi, 1)$, such that
\begin{itemize}
\item
$\pi=\pi_1(M)$;
\item
$\pi_i(B\pi)= 0$ for~$i\geq 2$,
\item
$g_*([M])\not=0$, where~$[M]$ is the fundamental class.
\end{itemize}
Therefore the composed map
\[
g\circ f^{(2)}: C_{{\rm fat}}^{(2)} \to B\pi
\]
extends to a map
\[
h: C_{{\rm fat}} \to B\pi
\]
in such a way that~$h$ coincides with~$g$ on~$M\subset C_{{\rm
fat}}^{(2)}$ (see Lemma~\ref{42b} for a more detailed statement in the
simplicial category).  Since
\[
h_*([M])= g_*([M]) \not=0,
\]
we conclude that the neighborhood~$C_{{\rm fat}}$ cannot contain a
current filling~$M$, proving the inequality.
\end{proof}

The proof above is formulated in the category of continuous maps,
which is the most convenient one in the context of classifying spaces.
On the other hand, a simplicial approximation can easily be
constructed if one works with finite skeleta of the classifying space.
The following essential lemma is standard.

\begin{lemma}
\label{42b} Consider finite dimensional simplicial
complexes~$M,Y,Z$, where~$M\subset Y$ is a subcomplex, $\dim(Y)=N$,
and~$g: M \to Z$ is continuous and simplicial, where~$\pi_i(Z)=0$
for~$i=2,\ldots,N-1$. Then given a simplicial map~$f^{(2)}:
Y^{(2)}\to M$ which is the identity on $M^{(2)}$, the composition
$g\circ f^{(2)}$ extends to a simplicial map~$h:Y\to Z$ whose
restriction to~$M\subset Y$ satisfies~$h|_M=g$.
\end{lemma}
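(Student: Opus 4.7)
The plan is to build $h$ by induction on the skeleta of $Y$, using the vanishing of $\pi_i(Z)$ in the prescribed range as the obstruction-vanishing hypothesis, and invoking the relative simplicial approximation theorem at each stage to stay in the simplicial category (subdividing $Y$ as needed, which is harmless since the lemma is ultimately used only for its consequences on the induced map on homology).

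First I would assemble a starting map on the subcomplex $Y^{(2)}\cup M\subset Y$ by declaring it to equal $g\circ f^{(2)}$ on $Y^{(2)}$ and $g$ on $M$. These prescriptions are consistent on the overlap $Y^{(2)}\cap M=M^{(2)}$, because $f^{(2)}$ restricts there to the identity, so $g\circ f^{(2)}|_{M^{(2)}}=g|_{M^{(2)}}$. This gives a simplicial map $h_2\colon Y^{(2)}\cup M\to Z$ that already realises both boundary conditions required of the final $h$.

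Next, for $k=3,\ldots,N$, I would extend inductively from $Y^{(k-1)}\cup M$ to $Y^{(k)}\cup M$, leaving the map unchanged on $M$. For each $k$-simplex $\sigma$ of $Y$ not contained in $M$, the previously constructed map restricts along $\partial\sigma\cong S^{k-1}$ to a continuous map into $Z$. Since $k-1\in\{2,\ldots,N-1\}$ and $\pi_{k-1}(Z)=0$ by hypothesis, this sphere map is null-homotopic and therefore extends continuously across $\sigma$. Pasting these cell-by-cell extensions yields a continuous $h_k\colon Y^{(k)}\cup M\to Z$ extending $h_{k-1}$. A relative simplicial approximation, carried out after a sufficiently fine iterated barycentric subdivision of $Y$ in which $M$, $Y^{(k-1)}\cup M$ and all the new $k$-cells appear as subcomplexes, then replaces $h_k$ by a simplicial map without altering its values on $Y^{(k-1)}\cup M$. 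Iterating up to $k=N$ delivers the desired map $h$, whose restrictions to $Y^{(2)}$ and to $M$ are, by construction, exactly $g\circ f^{(2)}$ and $g$.

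The main technical obstacle is the bookkeeping of the successive subdivisions: one must ensure that the simplicial approximation at stage $k$ does not disturb either the values on $M$, where the map has to remain $g$, or the values on $Y^{(k-1)}$, where the previous stage has already committed us. This is handled by the relative form of simplicial approximation, which keeps the map fixed on a prescribed subcomplex, together with a coherent choice of the subdivisions made from the outset. Everything else is a routine, essentially textbook, obstruction-theoretic argument.
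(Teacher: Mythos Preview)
The paper does not prove this lemma at all; it simply labels it ``standard'' and moves on. Your obstruction-theoretic extension over skeleta, followed by relative simplicial approximation, is precisely the standard argument the authors are gesturing toward, and it is correct. Your remark that subdivisions of $Y$ are harmless because only the induced map on homology is used downstream is exactly right and matches how the lemma is applied in the proof of Theorem~\ref{16}.
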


\begin{remark}
\label{43}
Let~$N$ be a maximal~$\epsilon$-net in~$M$, and consider the finite
dimensional imbedding~$\iota: M\to \ell^\infty(N)$ whose coordinate
functions are the distance functions~$f_p$ from points~$p\in N$.  The
imbedding is not quite strongly isometric, since~$d(p,q)= \| f_p -f_q
\|$ but the functions~$f_p$ and~$f_q$ only occur as coordinates
in~$\ell^\infty$ if~$p,q$ belong to the net.  However, chooosing
nearby points~$p_0, q_0$ of the maximal net, we obtain by the triangle
inequality
\[
d(p,q) \leq d(p_0,q_0) +2\epsilon = \| f_{p_0} - f_{q_0} \| +2\epsilon
\leq \| \iota(p) - \iota(q) \| + 4\epsilon .
\]
Thus upper bounds on distances in~$\ell^\infty$ entail upper bounds on
intrinsic distances in~$M$, up to arbitrarily small error.  A more
detailed discussion may be found in Proposition~\ref{NSA}.
\end{remark}

\begin{remark}[Gromov's scheme]
Gromov's scheme, outlined in Berger \cite[p.~298]{Be5}, is to fill a
manifold~$M=M^d$ in~$\ell^\infty$ by a
minimal~$(d+1)$-submanifold~$N$.  Next, $N$ contains a point~$x$ at
distance at least~$r$ from each point of $M$.  Since~$N$ is minimal,
the volume of the distance spheres from $x$ grows sufficiently fast.
Finally, the total volume of~$N$ is at least that of a ball of
radius~$r$ in~$N$, hence at least a constant times~$r^{d+1}$.
But~${\rm Vol}(M) \geq Const \; {\rm Vol}^{d/d+1}(N)$ by the
isoperimetric inequality for minimal submanifolds (with boundary)
in~$\ell^\infty$.  Combined with the inequality of Theorem~\ref{16},
this would complete the proof of Gromov's systolic inequality.

Of course, lacking a completeness result, no notion of minimal
submanifold in Banach space was available at the time, which accounts
in part for the complications in Gromov's original proof \cite{Gr1}.
In some sense, the scheme outlined by Berger is made rigorous in the
present text, where we do have completeness, cf. Remark~\ref{33}.
\end{remark}

\section{Approximation by finite-dimensional imbeddings}

%
%

\begin{proposition}
\label{51}
\label{NSA} Let~$M$ be a compact Riemannian manifold without
boundary.  For every $\eps>0$, there exists
a~$(1+\eps)$--bi-Lipschitz finite-dimensional imbedding of~$M$,
approximating its isometric imbedding in~$L^{\infty}(M)$.
\end{proposition}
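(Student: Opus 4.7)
The plan is, for each small $\eta>0$, to take a maximal $\eta$-net $N\subset M$ (finite by compactness of $M$) and consider the map $\iota_N:M\to\ell^\infty(N)$ defined by $\iota_N(x)=\bigl(\dist(x,p)\bigr)_{p\in N}$. This is simply the restriction of the Kuratowski isometric imbedding $x\mapsto\dist(x,\cdot)\in L^\infty(M)$ to the finite coordinate subset indexed by $N\subseteq M$, so it is a finite-dimensional approximation of it in the natural sense. The upper estimate $\|\iota_N(x)-\iota_N(y)\|_\infty\le d(x,y)$ is immediate from $|\dist(x,p)-\dist(y,p)|\le d(x,y)$, so the real content is the matching lower bound $\|\iota_N(x)-\iota_N(y)\|_\infty\ge(1-\eps)\,d(x,y)$ for every $x,y\in M$, for $\eta$ sufficiently small in terms of $\eps$ and the Riemannian geometry of $M$.

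I would split the lower bound into two regimes, according to a threshold $R>0$ of the order of the injectivity radius $\rho(M)$, which is strictly positive by compactness. In the regime $d(x,y)\ge R$, the naive argument already sketched in Remark~\ref{43} applies directly: pick $p\in N$ with $\dist(x,p)\le\eta$, so that
\[
|\dist(x,p)-\dist(y,p)|\;\ge\;d(x,y)-2\eta\;\ge\;(1-2\eta/R)\,d(x,y),
\]
which gives the desired bound as soon as $\eta\le\eps R/2$.

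The interesting regime is $d(x,y)<R$, where the additive error $2\eta$ is too coarse. Here I fix $R$ and an auxiliary length $D$ both below the injectivity radius, with $R$ small relative to $D$, extend the unit-speed minimising geodesic $\gamma$ from $y$ through $x$ past $x$ by length $D$, and call the resulting point $p_0\in M$. The extension is still minimising, so $f_{p_0}(y)-f_{p_0}(x)=d(x,y)$ exactly, and the first variation formula identifies $\nabla f_{p_0}(x)$ with the unit tangent $\gamma'(0)$ from $x$ towards $y$. Choosing $p\in N$ with $\dist(p,p_0)\le\eta$, the Lipschitz character of $\exp_x$ and $\exp_x^{-1}$ on the ball of radius $D$ (with constants depending only on the geometry of $M$) shows that the angle between $\nabla f_p$ and $\nabla f_{p_0}$ at any point $\gamma(t)$ with $t\le r:=d(x,y)$ is of order $(\eta+t)/D$. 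Integrating $\langle\nabla f_p(\gamma(t)),\gamma'(t)\rangle$ from $0$ to $r$ then gives
\[
f_p(y)-f_p(x)\;\ge\;r\Bigl(1-C\,\frac{\eta^2+\eta r+r^2}{D^2}\Bigr);
\]
fixing $D$ first and then taking $R$ and $\eta$ both of order $D\sqrt{\eps}$ makes the right-hand side at least $(1-\eps)\,r$, completing the lower bound.

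The main obstacle is the gradient/angle estimate used in the second regime: it rests on the fact that $\nabla f_q$ is the reversed unit tangent to the minimising geodesic to $q$ away from the cut locus of $q$, together with the curvature-driven control of nearby geodesics (equivalently, uniform Lipschitz bounds on $\exp_x$ and its inverse within the injectivity radius). These are entirely standard Riemannian tools, but the bookkeeping must confirm that the constants depend only on $M$ and not on the specific pair $(x,y)$, so that $\eta$ can ultimately be taken as a function of $\eps$ alone.
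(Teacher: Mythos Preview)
Your argument is correct, and at its geometric heart it coincides with the paper's: both proofs handle the short-distance regime by extending the minimising geodesic through the two nearby points to a fixed macroscopic distance, choosing a net point near the far end, and then invoking a first-variation/angle estimate to show that the distance from this net point separates $x$ and $y$ almost optimally. The packaging, however, is different. The paper argues by contradiction and compactness: it assumes the $(1-\eps)$ lower bound fails for every $\frac{1}{n}$-net, extracts a convergent sequence of offending pairs $(x_n,y_n)$, disposes of the case $x\neq y$ trivially, and in the case $x_n,y_n\to x$ uses a single Taylor expansion of $s\mapsto d(a_n,\exp_{x_n}(sv_n))$ together with the fact that small geodesic triangles have angle sum close to $\pi$ to get the contradiction. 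Your version is instead fully quantitative: you fix the scales $D,R,\eta$ in advance, split into two explicit regimes, and in the near regime integrate $\langle\nabla f_p,\gamma'\rangle$ along the segment. What you gain is an explicit dependence of the net mesh $\eta$ on $\eps$ (and on the injectivity radius and curvature bounds of $M$); what the paper gains is brevity, since compactness absorbs all the constants. Your angle bound $O((\eta+t)/D)$ is in fact a slight overestimate (the actual angle is $O(\eta/D)$, uniformly in $t$), but this only makes your conclusion easier.
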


\begin{proof}
For each~$n\in {\mathbb N}$, choose a maximal~$\frac{1}{n}$-separated
net
\[
{\mathcal M}_n\subset M,
\]
and imbed~$M$ in~$\ell^\infty$ by the distance functions from the
points in the net by the 1-Lipschitz map
\begin{equation}
\label{42}
\iota_n : M \to \ell^\infty({\mathcal M}_n).
\end{equation}
If there exists a real~$\eps>0$ such that the inverse of $\iota_n$ is
not~$(1-\eps)^{-1}$--Lipschitz, then there is a pair of
points~$x_n,y_n\in M$ such that the distance~$d(x_n, y_n)$ satisfies
\begin{equation}
\label{911} | \iota_n(x_n) - \iota_n(y_n) | \leq (1-\eps) d(x_n,
y_n),
\end{equation}
meaning
\begin{equation}
\label{43b} | d(x_n, z) - d(y_n, z) | \leq (1-\eps) d(x_n, y_n)
\qquad\forall z\in {\mathcal M}_n.
\end{equation}
Since $M$ is compact, we can assume with no loss of generality that
$x_n\to x$ and $y_n\to y$, and if $x\neq y$ we can contradict
\eqref{43b} by choosing $z_n\in M_n$ at distance less than $1/n$
from $x$ and $n$ large enough. So, $x=y$ and we denote
$s_n=d(x_n,y_n)\to 0$.

Since $M$ is compact and locally bi-Lipchitz to an Euclidean space
(with Lipschitz constant close to $1$ provided we choose sufficiently
small neighbourhoods), for any $\delta>0$ we can find $\bar b>0$ such
that all (geodesic) triangles in $M$ with side lengths less than $\bar
b$ have sum of the internal angles less $2\pi+\delta$; we choose
$\delta$ in such a way that $1-\eps/2<\cos\delta$ and we assume with
no loss of generality that $\bar b\leq{\rm InjRad}(M)$.

Let $v_n\in T_{x_n}M$ be the unit vector such that $y_n={\rm
exp}_{x_n}(s_nv_n)$, set $q_n:={\rm exp}_{x_n}(\tfrac{1}{2}\bar b
v_n)$ and denote by $a_n \in {\mathcal M}_n$ a point of the maximal
net nearest to~$q_n$. Denoting by $\alpha_n$ be the angle at~$x_n$
of the geodesic triangle having $a_n,y_n,x_n$ as vertices
\[
\alpha_n := \angle a_n x_n y_n
\]
we have the Taylor expansion
\begin{equation}\label{exp_dist}
d(a_n,{\rm exp}_{x_n}(sv_n))=d(a_n,x_n)-s\cos\alpha_n+s\omega_n(s)
\end{equation}
where, thanks to the smoothness of $d$ in both variables,
$\sup_n|\omega_n(s)|$ is infinitesimal as $s\downarrow 0$.

We claim that $\alpha_n<\delta$ for $n$ large enough; indeed, the
angle at $y_n$ in the geodesic triangle having $a_n,y_n,q_n$ as
vertices tends to $0$ because the length of the side from $q_n$ to
$a_n$ tends to $0$, while the length of the other two sides does
not. As a consequence the angle at $y_n$ in the geodesic triangle
having $a_n,y_n,x_n$ as vertices tends to $\pi$. Since all sides of
the latter triangle are shorter than $\bar b$ for $n$ large enough,
our choice of $\bar b$ ensures that the angle $\alpha_n$ is less
than $\delta$ for $n$ large enough. Putting $s=s_n$ in
\eqref{exp_dist} we get
$$
|d(a_n,y_n)-d(a_n,x_n)|=s_n\cos\alpha_n+o(s_n)>
(1-\eps/2)s_n+s_n\omega_n(s_n)=(1-\eps/2)s_n+o(s_n)
$$
contradicting \eqref{43b} for $n$ large.
\end{proof}

\section{Preliminary definitions}
\label{four}

Let~$(E,d_E)$ be a metric space and~$k\geq 0$ integer. We assume,
since this suffices for our purposes, that~$(E,d_E)$ is separable;
this assumption is needed to avoid subtle measurability problems
(assuming that the cardinality of $E$ is an Ulam number this
assumption could be avoided, see \cite[2.1.6]{federer} and
\cite[Lemma~2.9]{ak2}). We use the standard notation $B_r(x)$ for the
open balls in $E$, ${\rm Lip}(E)$ for the space of Lipschitz
real-valued functions, relative to $d_E$, and ${\rm Lip}_b(E)$ for
bounded Lipschitz functions.

We consider, as in \cite{ak2}, the space $MF_k(E)$ of
$k$-dimensional currents in $E$. We denote by ${\bf M}(T)$ the mass
of $T\in MF_k(E)$, possibly infinite. We recall the basic
definitions of mass, support, push-forward, restriction, boundary in
the appendix.

Spaces of currents in $E$ are defined as in \cite{ak2}, with the
same notation, we will only use~$\rc{k}{E}$ (integer rectifiable
currents with finite mass) and $\ic{k}{E}$ (currents in $\rc{k}{E}$
whose boundary belongs to $\rc{k-1}{E}$), see Section~\ref{sretti}.
In the sequel $p\geq 2$ is a given integer.

\subsection{Flat integer currents}
We shall denote by $\fc{k}{E}$ the currents in $MF_k(E)$ that can by
written as $R+\partial S$ with $R\in\rc{k}{E}$ and
$S\in\rc{k+1}{E}$. It is obviously an additive (Abelian) group and
\begin{equation}\label{amb2}
T\in\fc{k}{E}\qquad\Longrightarrow\qquad\partial T\in\fc{k-1}{E}.
\end{equation}
$\fc{k}{E}$ is a metric space when endowed with the the distance
$d(T_1,T_2)=\fflat(T_1-T_2)$, where
$$
\fflat(T):=\inf\left\{{\bf M}(R)+{\bf M}(S):\ R\in\rc{k}{E},\,\,
S\in\rc{k+1}{E},\,\,T=R+\partial S\right\}.
$$
The subadditivity of $\fflat$, namely $\fflat(nT)\leq n\fflat(T)$,
ensures that $d$ is a distance, and the completeness of the groups
$\rc{k}{E}$, when endowed with the mass norm, ensures that
$\fc{k}{E}$ is complete. Also, whenever $\ic{k}{E}$ is dense in
$\rc{k}{E}$ (see Proposition~\ref{plindenstrauss} for sufficient
conditions), the subset
$$
\left\{R+\partial S:\
R\in\ic{k}{E},\,\,S\in\ic{k+1}{E}\right\}\subset\ic{k}{E}
$$
is dense in $\fc{k}{E}$. For the special class of currents $T$ in
$\fc{k}{E}$ with finite mass the density result can be strengthened:
indeed, if $T=T_i+R_i+\partial S_i$ with $T_i\in\ic{k}{E}$,
$R_i\in\rc{k}{E}$, $S_i\in\rc{k+1}{E}$ and ${\bf M}(R_i)+{\bf
M}(S_i)\to 0$, then Theorem~\ref{tbrett} gives $S_i\in\ic{k+1}{E}$
(because $\partial S_i$ has finite mass) hence $T_i+\partial
S_i\in\ic{k}{E}$. So, $T$ can be approximated in the stronger mass
norm by the currents $T_i+\partial S_i$ and this yields
\begin{equation}\label{comedia}
\left\{T\in\fc{k}{E}:\ {\bf M}(T)<\infty\right\}=\rc{k}{E}.
\end{equation}

Notice also that
\begin{equation}\label{amb1}
\fflat(\partial T)\leq\fflat(T),\qquad\qquad \forall T\in\fc{k}{E}.
\end{equation}
In addition, since $\partial (\varphi_\sharp
S)=\varphi_\sharp(\partial S)$ we get
\begin{equation}\label{amb3bis}
\fflat(\varphi_\sharp T)\leq [{\rm Lip}(\varphi)]^k\fflat(T)
\end{equation}
for all $T\in\fc{k}{E}$, $\varphi\in {\rm Lip}(E,\R^k)$.

It should also be emphasized that the concepts introduced in this
section are sensitive to the ambient space, namely if $E$ embeds
isometrically in $F$ then, for $T\in\fc{k}{E}$,
 $\fflat(i_\sharp T)$ can well be strictly smaller than $\fflat (T)$;
 the same remark applies
to the ${\bf M}_p$ mass, built in Section~\ref{sdefmp}.
This is not the case for the concepts of mass, a genuine
isometric invariant, see \cite{ak2}.

\subsection{Flat distance modulo $p$}
For $T\in\fc{k}{E}$ we define:
$$
\fflat_p(T):=\inf\left\{\fflat(T-pQ):\ Q\in\fc{k}{E}\right\}.
$$
The definition of $\fflat$ gives
$$
\fflat_p(T)=\inf\left\{{\bf M}(R)+{\bf M}(S):\ T=R+\partial
S+pQ,\,\,R\in\rc{k}{E},\,\,
S\in\rc{k+1}{E},\,\,Q\in\fc{k}{E}\right\}.
$$
Furthermore, whenever $\ic{k}{E}$ is dense $\fc{k}{E}$, both infima
are unchanged if $Q$ runs in $\ic{k}{E}$.

Obviously  $\fflat_p(T)\leq\fflat(T)$, and \eqref{amb1} together
with \eqref{amb2} give
\begin{equation}\label{amb3}
\fflat_p(\partial T)\leq\fflat_p(T),\qquad\quad
 T\in \fc{k}{E},
\end{equation}
while \eqref{amb3bis} gives
\begin{equation}\label{amb3ter}
\fflat_p(\varphi_\sharp T)\leq [{\rm Lip}(\varphi)]^k\fflat_p(T)
\end{equation}
for all $T\in\fc{k}{E}$, $\varphi\in {\rm Lip}(E,\R^k)$.

We now introduce an equivalence relation $\modp$ in $\fc{k}{E}$,
compatible with the group structure, by saying that $T=\tilde{T}$
$\modp$ if $\fflat_p(T-\tilde{T})=0$, and denote by $\fcp{k}{E}$ the
quotient group. Clearly $T=0$ $\modp$ if $T=pQ$ for some
$Q\in\fc{k}{E}$, but the converse implication is not known, not even
in Euclidean spaces.

The equivalence classes are closed in $\fc{k}{E}$ and by
\eqref{amb3} the boundary operator can be defined also in the
quotient spaces $\fcp{k}{E}$ in such a way that
$$
\partial [T]=[\partial T]\in\fcp{k-1}{E}\qquad\forall T\in\fc{k}{E}.
$$
The same holds, thanks to \eqref{amb3ter}, for the push-forward
operator, defined in such a way to commute with the equivalence
relation $\modp$. We emphasize that $\fcp{k}{E}$, when endowed with
the distance induced by $\fflat_p$, is a complete metric space: to
see this, let $([T_h])\subset\fcp{k}{E}$ be a Cauchy sequence and
assume with no loss of generality that
\[
\sum_h\fflat_p(T_{h+1}-T_h)<\infty;
\]
we can find $R_h\in\rc{k}{E}$, $S_h\in\rc{k+1}{E}$ and
$Q_h\in\fc{k}{E}$ such that
\[
T_{h+1}=T_h+R_h+\partial S_h+pQ_h\qquad\text{and}\qquad
\sum_{h=1}^\infty{\bf M}(R_h)+{\bf M}(S_h)<\infty.
\]
Setting $\tilde{T}_h:=T_h-p\sum_0^{h-1}Q_h$ it follows that
$\tilde{T}_h=T_h$ $\modp$ and since
$\tilde{T}_{h+1}-\tilde{T}_h=R_h+\partial S_h$ it follows that
$(\tilde{T}_h)$ is a Cauchy sequence in $\fc{k}{E}$. Denoting by $T$
its limit, by the inequality $\fflat_p\leq\fflat$ we infer
$[T_h]=[\tilde{T}_h]\to [T]$ in $\fcp{k}{E}$.

\section{Restriction, slicing}

The restriction and slicing operators can be easily extended to the
set $\fcstar{k}{E}$, defined as the closure in $\fc{k}{E}$ of
currents in $\ic{k}{E}$, using a completion argument. In the cases
considered in Proposition~\ref{plindenstrauss}, this closure
coincides with the whole of $\fc{k}{E}$ and, in any case, it is
easily seen that $\partial$ maps $\fcstar{k}{E}$ into
$\fcstar{k-1}{E}$.

Recall from \cite{ak2} that, for $u\in {\rm Lip}(E)$ and $T$ having
finite mass and boundary of finite mass the slice operator $\langle
T,u,r\rangle\in MF_{k-1}(E)$ is defined by
$$
\langle T,u,r\rangle:=\partial (T\res\{u<r\})-(\partial
T)\res\{u<r\}.
$$
Notice that $\partial\langle T,u,r\rangle=-\langle\partial
T,u,r\rangle$. It turns out that for $\Leb{1}$-a.e. $r\in\R$
$\langle T,u,r\rangle$ has finite mass, and
\begin{equation}\label{localization}
{\bf M}(\langle T,u,r\rangle)\leq {\rm
Lip}(u)\frac{d}{dr}\|T\|(\{u<r\}).
\end{equation}
Now, let $T$ be with finite mass; since $T=R+\partial S$ with
$R\in\rc{k}{E}$ and $S\in\rc{k+1}{E}$ imply that $\partial S$ has
finite mass we can apply the slicing operator to $S$ to obtain
$$
T\res\{u<r\}=R\res\{u<r\}+(\partial S)\res\{u<r\}= R\res\{u<r\}+
\partial (S\res\{u<r\})-\langle S,u,r\rangle.
$$
Since $\langle S,u,r\rangle$ belongs to $\rc{k}{E}$
for $\Leb{1}$-a.e. $r\in\R$, thanks to Proposition~\ref{pslicei},
by integration between $m$ and $\ell$ we obtain
\begin{eqnarray*}
\int_m^{*\ell}\fflat(T\res\{u<r\})\,dr&\leq&\int_m^\ell{\bf
M}(R\res\{u<r\})+{\bf M}(S\res\{u<r\})\,dr+{\rm Lip}(u)\|S\|(\{u<\ell\})\nonumber\\
&\leq& (\ell-m){\bf M}(R)+(\ell-m+{\rm Lip}(u)){\bf M}(S)
\end{eqnarray*}
where $\int^*$ denoted the upper integral (we use it to avoid the
discussion of the measurability of the map
$r\mapsto\fflat(T\res\{u<r\})$). Since $R$ and $S$ are arbitrary we
get
\begin{equation}\label{cachan}
\int_m^{*\ell}\fflat(T\res\{u<r\})\,dr\leq (\ell-m+{\rm
Lip}(u))\fflat(T).
\end{equation}
 Now, let $T\in\fc{k}{E}$, assume that there exist $T_n\in\fc{k}{E}$
 with finite mass convergent to $T$ in $\fc{k}{E}$ (this surely holds if
 $T\in\fcstar{k}{E}$), with
 $\sum_n\fflat(T_n-T)<\infty$,
and let $u\in {\rm Lip}(E)$. By adding the inequalities
\eqref{cachan} relative to $T_{n+1}-T_n$, and taking into account
the subadditivity of the outer integral and the fact that $\ell$ and
$m$ are arbitrary, we obtain that $(T_{n+1}\res\{u<r\})$ is a Cauchy
sequence in $\fc{k}{E}$ for $\Leb{1}$-a.e. $r\in\R$.

It follows that for any such $T$ we can define
\begin{equation}\label{assemblea}
T\res\{u<r\}:=\lim_{n\to\infty} T_n\res\{u<r\}\in\fc{k}{E}
\end{equation}
whenever the limit exists. By construction the operator $T\mapsto
T\res\{u<r\}$ is additive and \eqref{cachan} still holds when
$T\in\fc{k}{E}$. A similar argument shows that this definition is
independent, up to Lebesgue negligible sets, on the chosen
approximating sequence $(T_n)$, provided the ``fast convergence''
condition $\sum_n\fflat(T_n-T)<\infty$ holds.

Having defined the restriction, the slice operator, mapping currents
in $\fcstar{k}{E}$ into currents in $\fcstar{k-1}{E}$, can be again
defined by
$$
\langle T,u,r\rangle:= \partial (T\res\{u<r\})-(\partial
T)\res\{u<r\})
$$
whenever the right hand side is defined. We still have the property
$\partial \langle T,u,r\rangle=-\langle\partial T,u,r\rangle$.

From \eqref{cachan} we immediately get
\begin{equation}\label{cachanbis}
\int_m^{*\ell}\fflat_p(T\res\{u<r\})\,dr\leq (\ell-m+{\rm
Lip}(u))\fflat_p(T).
\end{equation}
In particular  $\fflat_p(T)=0$ implies $\fflat_p(T\res\{u<r\})=0$
for $\Leb{1}$-a.e. $r\in\R$, so that the restriction operator can
also be viewed as an operator in the quotient spaces
$$\fcpstar{k}{E}:=\{[T]:\ T\in\fcstar{k}{E}\},$$
with the property
$$
[T]\res\{u<r\}=[T\res\{u<r\}]\qquad\text{for $\Leb{1}$-a.e.
$r\in\R$.}
$$
Accordingly, the same holds for the slice operator, satisfying
$\partial \langle [T],u,r\rangle=-\langle\partial [T],u,r\rangle$
and
$$
\langle [T],u,r\rangle=[\langle T,u,r\rangle] \qquad\text{for
$\Leb{1}$-a.e. $r\in\R$.}
$$

\section{Isoperimetric inequalities}\label{swenger}

In this section we discuss the validity of isoperimetric
inequalities $\modp$ in suitable subspaces
$\iicp{k}{E}\subset\fcpstar{k}{E}$ analogous to those valid in the
case of currents with integer coefficients. We follow, as in
\cite{We1}, an axiomatic approach: we assume the existence, given
these subspaces  $\iicp{k}{E}$, of a notion of $p$-mass ${\bf
M}_p:\iicp{k}{E}\to\R$ satisfying the following property:

\begin{definition}[Additivity]
For all $[T]\in\iicp{k}{E}$ there exists a $\sigma$-additive Borel
measure $\|T\|_p$ satisfying
$$
{\bf M}_p([T]\res\{u<r\})=\|T\|_p(\{u<r\})\qquad\text{for
$\Leb{1}$-a.e. $r\in\R$}
$$
for all $u\in {\rm Lip}(E)$.
\end{definition}

Strictly speaking, we should use the notation $\|[T]\|$ to emphasize
that the measure depends only on the equivalence class of $T$, but
we opted for a simpler notation.

Then, we assume that $\iicp{k}{E}$ and ${\bf M}_p$ are well-behaved
with respect to the slice operator, and satisfy the isoperimetric
inequality for $1$-dimensional currents and the homogeneous version
of the isoperimetric inequality (typically achieved by a simple cone
construction):

\begin{itemize}
\item[(i)] For $k\geq 1$ the slice operator $\langle [T],u,r\rangle$ maps $\iicp{k}{E}$
into $\iicp{k-1}{E}$ and
\begin{equation}\label{coarea}
{\rm Lip}(u)\frac{d}{dr}{\bf M}_p([T]\res\{u<r\})\geq {\bf
M}_p(\langle [T],u,r\rangle) \qquad\text{for $\Leb{1}$-a.e.
$r\in\R$.}
\end{equation}
\item[(ii)] For some constant $c^*$ the following
holds: for all $[L]\in\iicp{1}{E}$ with $\partial [L]=0$ and bounded
support there exists $[T]\in\iicp{2}{E}$ with $\partial [T]=[L]$ and
$$
{\bf M}_p([T])\leq c^* \bigl[{\bf M}_p([L])\bigr]^2.
$$
In addition, if $[L]$ is supported in a ball $B$, we may choose
$[T]$ supported in the same ball.
\item[(iii)] For some constant $c_k$ the following holds:
for all $[L]\in\iicp{k}{E}$ with $\partial [L]=0$ and support
contained in a ball with radius $R$ there exists
$[T]\in\iicp{k+1}{E}$ supported in the same ball with $\partial
[T]=[L]$ and
$$
{\bf M}_p([T])\leq c_k R {\bf M}_p([L]).
$$
\item[(iv)] For some constant $A_k>0$, the following holds:
for all $[T]\in\iicp{k}{E}$ we have
$$
\liminf_{r\downarrow 0}\frac{\|T\|_p(B_r(x))}{r^k}\geq A_k
\qquad\text{$\|T\|_p$-a.e.}
$$
\end{itemize}

Given these properties, the nice and constructive decomposition
argument in \cite{We,We1} (that we reproduce in part in
Theorem~\ref{tcheck} to prove the initial isoperimetric inequality
(ii)) provides the following result:

\begin{theorem}[Isoperimetric inequality $\modp$]\label{tisop}
Assume that $E$, $\iicp{k}{E}$ and ${\bf M}_p$ fulfil the additivity
property and conditions (i), (ii), (iii), (iv). Then, for $k\geq 1$
there exist constants $\gamma_k$ such that, if $[L]\in\iicp{k}{E}$
has bounded support and satisfies $\partial [L]=0$, there exists
$[T]\in\iicp{k+1}{E}$ with $\partial [T]=[L]$ and
$$
{\bf M}_p([T])\leq\gamma_k\bigl[{\bf M}_p([L])\bigr]^{(k+1)/k}.
$$
For $k\geq 2$ the constant $\gamma_k$ depends on $\gamma_{k-1}$,
$c_k$, $A_k$.
\end{theorem}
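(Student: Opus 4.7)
I would proceed by induction on $k$, with the base case $k=1$ being precisely assumption (ii). For $k \geq 2$, assuming the inequality in dimension $k-1$ with constant $\gamma_{k-1}$, the plan is to establish a Wenger-style \emph{decomposition lemma}: for any cycle $[L] \in \iicp{k}{E}$ with bounded support and $\mathbf{M}_p([L]) = M$, produce a splitting $[L] = \partial[S] + [L']$ with $[S] \in \iicp{k+1}{E}$, and $[L']$ automatically a cycle (since $\partial[L] = 0$ and $\partial^2 = 0$), satisfying
$$
\mathbf{M}_p([S]) \leq C M^{(k+1)/k}, \qquad \mathbf{M}_p([L']) \leq \theta M,
$$
with $\theta \in (0,1)$ and $C$ depending only on $c_k$, $A_k$, $\gamma_{k-1}$. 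Iterating the lemma on $[L'], [L''], \ldots$ produces a summable series of fillings of total mass at most $C M^{(k+1)/k}/(1 - \theta^{(k+1)/k})$, which defines $\gamma_k$.

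To build the splitting I would select a point $x_0$ in the support of $\|L\|_p$ where the lower density (iv) applies, set $u(y) := d(x_0,y)$ and $\phi(r) := \|L\|_p(B_r(x_0))$, and pick a radius $\rho$ of order $M^{1/k}$ via the stopping rule $\rho := \sup\{r > 0 : \phi(r) \geq \eps r^k\}$ for a small constant $\eps < A_k/2$. The coarea inequality (i) combined with Chebyshev yields some $r \in (\rho/2, \rho)$ with
$$
\mathbf{M}_p\bigl(\langle [L], u, r\rangle\bigr) \leq 2\phi(\rho)/\rho.
$$
Since $\partial[L]=0$, this slice is a $(k-1)$-cycle, filled by the inductive hypothesis with some $[T_1] \in \iicp{k}{E}$ of mass $\mathbf{M}_p([T_1]) \leq \gamma_{k-1}(2\phi(\rho)/\rho)^{k/(k-1)}$. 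Then $[L]\res\{u<r\} - [T_1]$ is a $k$-cycle localized near $B_\rho(x_0)$, so the cone inequality (iii) provides a filling $[S]$ with $\mathbf{M}_p([S]) \leq c_k \rho (\phi(\rho) + \mathbf{M}_p([T_1]))$, and $[L'] := [L]\res\{u \geq r\} + [T_1]$ closes the splitting.

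The main obstacle is the parameter balance. Choosing $\eps$ so that $\gamma_{k-1}(2\eps)^{1/(k-1)} \leq 1/2$ forces $\mathbf{M}_p([T_1]) \leq \tfrac12 \phi(\rho)$ at the stopping scale, hence $\mathbf{M}_p([L']) \leq M - \tfrac12\phi(\rho)$, while $\phi(\rho) \lesssim \eps\rho^k$ at the same scale yields $\mathbf{M}_p([S]) \lesssim c_k \rho \phi(\rho) \lesssim c_k M^{(k+1)/k}$. Two subtleties remain. First, a single stopping ball may fail to capture a definite fraction of $M$ when the mass is spread unevenly: this I would remedy by a Vitali-type covering argument, extracting a disjoint family $\{B_{\rho_i}(x_i)\}$ whose enlargements cover the support, performing the slice-and-fill construction on each, and aggregating into a single $[S]$; the lower density (iv) is what ensures the extracted family captures a uniform fraction of the total mass. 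Second, to apply (iii) to $[L]\res\{u<r\} - [T_1]$ in an actual ball, the inductive hypothesis must be strengthened to produce $[T_1]$ with $\diam(\mathrm{supp}\,[T_1]) \lesssim \mathbf{M}_p(\text{slice})^{1/(k-1)}$, a diameter bound that arises naturally by absorbing it into the inductive statement, since the construction localizes everything in balls of the stopping scale.
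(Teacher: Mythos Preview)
Your proposal is correct and follows essentially the same route as the paper, which simply invokes Wenger's argument in \cite{We1} for the induction step after the base case (ii). The only refinement worth noting: the strengthened inductive hypothesis the paper (and Wenger) actually carries is that the filling may be chosen with support in the \emph{same ball} as the cycle---a condition already built into (ii) and (iii)---rather than your diameter bound on $[T_1]$; this formulation is cleaner because it makes the localization of $[L]\res\{u<r\}-[T_1]$ inside $B_r(x_0)$ immediate for the cone step (iii), without having to argue separately about where $[T_1]$ lands.
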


\begin{proof} The proof is by induction on $k\geq 1$; in order to apply
the construction of \cite{We1} one needs to assume inductively that
$[T]$ can be chosen with support in a ball $B$ whenever $[L]$ is
supported in the ball. The case $k=1$ being covered by assumption
(i) and the induction step goes exactly as in \cite{We1}.
\end{proof}

\section{Definition of ${\bf M}_p$}\label{sdefmp}

For $T\in\fc{k}{E}$, its (relaxed) mass modulo $p$ is defined by:
\begin{equation}\label{mp}
{\bf M}_p(T):=\inf\left\{\liminf_{h\to\infty}{\bf M}(T_h):
T_h\in\rc{k}{E},\,\,\fflat_p(T_h-T)\to 0\right\}
\end{equation}
with the convention ${\bf M}_p(T)=+\infty$ if no approximating
sequence $(T_h)$ with finite mass exists. If $\ic{k}{E}$ is dense in
$\rc{k}{E}$ in mass norm then, as we already observed,
$\fcstar{k}{E}=\fc{k}{E}$ and flat chains with finite mass can be
approximated in mass by currents in $\ic{k}{E}$. Therefore, under
this assumption, the infimum is unchanged is we require the
approximating currents $T_h$ to be in $\ic{k}{E}$.

Obviously ${\bf M}_p\leq {\bf M}$ and ${\bf M}_p(\tilde{T})={\bf
M}_p(T)$ if $\fflat_p(\tilde{T}-T)=0$; finally, $T\mapsto{\bf
M}_p(T)$ is lower semicontinuous with respect to
$\fflat_p$-convergence. Actually, it is easy to check that ${\bf
M}_p$ is the largest functional, among those bounded above by ${\bf
M}$, with all these properties: it follows in particular that ${\bf
M}_p(T)\geq\fflat_p(T)$. We can think of ${\bf M}_p$ also as a map
defined in the quotient groups $\fcp{k}{E}$ and we shall not use a
distinguished notation for it.

\begin{theorem}\label{teomassp}
Assume that $E$ is compact. For all $[T]\in\fcp{k}{E}$ with ${\bf
M}_p([T])<\infty$ there exists a finite, nonnegative and
$\sigma$-additive Borel measure $\|T\|_p$ such that
\begin{equation}\label{garibaldi}
{\bf M}_p([T]\res\{u<r\})=\|T\|_p(\{u<r\})\qquad\text{for
$\Leb{1}$-a.e. $r\in\R$}
\end{equation}
for all $u\in {\rm Lip}(E)$.
\end{theorem}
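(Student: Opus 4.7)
My plan is to realize $\|T\|_p$ as the weak$^*$ limit of the mass measures $\|T_h\|$ of an essentially optimal approximating sequence $T_h\in\rc{k}{E}$, and then to establish \eqref{garibaldi} by squeezing ${\bf M}_p([T]\res\{u<r\})$ between two matching bounds: one coming from the lower semicontinuity of ${\bf M}_p$ and the portmanteau theorem, the other from the subadditivity of ${\bf M}_p$ combined with the global identity $\mu(E)={\bf M}_p([T])$.

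First I would choose $T_h\in\rc{k}{E}$ with $\fflat_p(T_h-T)\to 0$ and ${\bf M}(T_h)\to{\bf M}_p([T])$; each $T_h$ has a Borel mass measure $\|T_h\|$ with $\|T_h\|(E)={\bf M}(T_h)$, and these total masses are uniformly bounded. Since $E$ is compact, weak$^*$ compactness yields a subsequence, not relabelled, along which $\|T_h\|$ converges weakly$^*$ to a finite nonnegative Borel measure $\mu$; I set $\|T\|_p:=\mu$. Testing weak$^*$ convergence against the constant function $1$ gives $\mu(E)=\lim_h\|T_h\|(E)={\bf M}_p([T])$.

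Now I fix $u\in{\rm Lip}(E)$. The finite measure $\mu$ puts positive mass on $\{u=r\}$ for at most countably many $r$; call this set $N_u$. For every $r\notin N_u$ the portmanteau theorem yields both $\|T_h\|(\{u<r\})\to\mu(\{u<r\})$ and $\|T_h\|(\{u\geq r\})\to\mu(\{u\geq r\})$. Meanwhile, applying \eqref{cachanbis} to $T_h-T$ on an arbitrary interval $[m,\ell]$ gives
\[
\int_m^{*\ell}\fflat_p((T_h-T)\res\{u<r\})\,dr\leq(\ell-m+{\rm Lip}(u))\fflat_p(T_h-T)\to 0,
\]
so, passing to a further $u$-dependent subsequence, $\fflat_p((T_h-T)\res\{u<r\})\to 0$ for $\Leb{1}$-a.e.\ $r\in\R$; by linearity the same holds with $\{u<r\}$ replaced by $\{u\geq r\}$. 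For such a good $r\notin N_u$, the lower semicontinuity of ${\bf M}_p$ and the bound ${\bf M}_p\leq{\bf M}$ yield
\[
{\bf M}_p([T]\res\{u<r\})\leq\liminf_h\|T_h\|(\{u<r\})=\mu(\{u<r\}),
\]
and analogously ${\bf M}_p([T]\res\{u\geq r\})\leq\mu(\{u\geq r\})$.

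To close the argument I would add these bounds and invoke the subadditivity of ${\bf M}_p$, which is immediate from the definition \eqref{mp}:
\[
{\bf M}_p([T])\leq{\bf M}_p([T]\res\{u<r\})+{\bf M}_p([T]\res\{u\geq r\})\leq\mu(\{u<r\})+\mu(\{u\geq r\})=\mu(E)={\bf M}_p([T]).
\]
Every intermediate inequality is therefore an equality, and in particular ${\bf M}_p([T]\res\{u<r\})=\mu(\{u<r\})$ for $\Leb{1}$-a.e.\ $r\in\R$, which is \eqref{garibaldi}. The main delicate point is precisely this squeeze: it is the global identity ${\bf M}_p([T])=\mu(E)$, granted by compactness of $E$ and the almost optimal choice of $T_h$, that upgrades the one-sided semicontinuity bound into the desired equality, uniformly for every Lipschitz $u$; the $u$-dependent subsequence is harmless, since both $\mu$ and ${\bf M}_p$ are intrinsic.
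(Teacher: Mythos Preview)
Your proposal is correct and follows essentially the same route as the paper's proof: choose an almost optimal approximating sequence $T_h\in\rc{k}{E}$, pass to a subsequence so that $\|T_h\|$ converges weakly to a measure $\mu$ with $\mu(E)={\bf M}_p([T])$, and then for each Lipschitz $u$ combine the lower semicontinuity of ${\bf M}_p$ on the two pieces $[T]\res\{u<r\}$, $[T]\res\{u\geq r\}$ with the subadditivity inequality to squeeze all inequalities into equalities. The only cosmetic difference is that the paper extracts once and for all a fast-converging subsequence (so that $\sum_h\fflat_p(T_h-T)<\infty$), which makes the a.e.\ convergence of the restrictions hold simultaneously for every $u$ and avoids your $u$-dependent subsequence; your remark that this dependence is harmless because $\mu$ and ${\bf M}_p$ are intrinsic is correct, so this is a matter of presentation rather than substance.
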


\noindent {\sc Proof.} Let $(T_i)\subset\rc{k}{E}$ be such that
${\bf M}(T_i)\to{\bf M}_p(T)$ and $\fflat_p(T_i-T)\to 0$. Possibly
extracting a subsequence we can assume without loss of generality
that
\[
\sum_i\fflat_p(T_i-T)<\infty
\]
and that $\|T_i\|$ weakly converge, in the duality with $C(E)$, to
some finite, nonnegative and $\sigma$-additive Borel measure
$\nu$. Obviously $\nu(E)={\bf M}_p(T)$ and we claim that $\nu$
fulfills~\eqref{garibaldi}.  Indeed, let $u\in {\rm Lip}(E)$ be fixed
and let us adopt the notation
\[
R\res\{u>r\}
\]
for $R\res\{-u<-r\}$; by \eqref{cachanbis} we infer that for
$\Leb{1}$-a.e. $r\in\R$, one has that $(T_i\res\{u<r\})$ and
$(T_i\res\{u>r\})$ are Cauchy sequences with respect to $\fflat_p$
and the sum of their limits is $T$ (indeed, since $T_h$ have finite
mass,
\[
T_h=T_h\res\{u<r\}+T_h\res\{u>r\}
\]
with at most countably many exceptions). Then, denoting by
$T\res\{u<r\}$ and $T\res\{u>r\}$ the respective limits, the lower
semicontinuity of ${\bf M}_p$ gives
$$
{\bf M}_p(T\res\{u<r\})\leq
\liminf_{i\to\infty}\|T_i\|(\{u<r\}),\qquad {\bf
M}_p(T\res\{u>r\})\leq\liminf_{i\to\infty}\|T_i\|(\{u>r\})
$$
The subadditivity of ${\bf M}_p$ yields
\begin{eqnarray*}
{\bf M}_p(T)&\leq& {\bf M}_p(T\res\{u<r\})+ {\bf
M}_p(T\res\{u>r\})\\&\leq&
\liminf_{i\to\infty}\|T_i\|(\{u<r\})+\liminf_{i\to\infty}\|T_i\|(\{u>r\})\\
&\leq&
\limsup_{i\to\infty}\|T_i\|(\{u<r\})+\liminf_{i\to\infty}\|T_i\|(\{u>r\})\\
&\leq& \limsup_{i\to\infty}\|T_i\|(E)={\bf M}_p(T).
\end{eqnarray*}
It follows that all inequalities are equalities. Hence,
\[
\|T_i\|(\{u>r\})\to{\bf M}_p(T\res\{u>r\})
\]
for $\Leb{1}$-a.e.  $r\in\R$. But, thanks to the weak convergence of
$\|T_i\|$ to $\nu$, we have also
\[
\|T_i\|(\{u>r\})\to \nu(\{u>r\})
\]
with at most countably many exceptions (corresponding to the numbers
$r$ such that $\nu(\{u=r\})>0$, see for instance
\cite[Proposition~1.62(b)]{afp}). This proves \eqref{garibaldi}.  \qed

Using the measure $\|T\|_p$ we can define the support of
$T\in\fcpstar{k}{E}$, when $T$ has finite ${\bf M}_p$ mass.

\begin{definition}[Support]
Assume that $E$ is compact and that $[T]\in\fcpstar{k}{E}$ has finite
${\bf M}_p$ mass. We denote by ${\rm supp\,}[T]$ the support of the
measure $\|T\|_p$, namely $x\in {\rm supp\,}[T]$ if and only if
$\|T\|_p(B_r(x))>0$ for all $r>0$.
\end{definition}

\section{Definitions of $\icp{k}{E}$}\label{sretti}

In this section we define classes $\icp{k}{E}$ in such a way that the
properties listed in Section~\ref{swenger} hold with
$\iicp{k}{E}=\icp{k}{E}$, so that the isoperimetric inequality holds
in~$\iicp{k}{E}$.

\subsection{Currents $\segop\theta\segcl$}

Recall that, for $\theta\in L^1(\R^k)$, $\segop\theta\segcl\in
MF_k(\R^k)$ is the $k$-current in $\R^k$ defined by
$$
\segop \theta\segcl(f_0\,d\pi_1\wedge\ldots\wedge d\pi_k)=
\int_{\R^k}\theta f_0{\rm det}\nabla\pi\,dx.
$$
The change of variables formula for Lipschitz maps immediately gives
\begin{equation}\label{area}
f_\sharp\segop\theta\segcl=\segop (\sigma\theta)\circ f^{-1}\segcl
\end{equation}
whenever $f$ is a Lipschitz and injective map from $\{f\neq
0\}\subset\R^k$ to $\R^k$. Here $\sigma(x)\in\{-1,1\}$ is the sign
of the jacobian determinant of $\nabla f(x)$ (recall that points $x$
where $\sigma(x)$ is not defined, i.e. $\nabla f(x)$ is singular,
are mapped to a Lebesgue negligible set, and so they are
irrelevant).

\subsection{Countably $\Haus{k}$-rectifiable sets and integer rectifiable currents}
Denoting by $\Haus{k}$ the Hausdorff $k$-dimensional measure in $E$,
we recall also that a set $S\subset E$ is said to be countably
$\Haus{k}$-rectifiable if we can find countably many Borel sets
$B_i\subset\R^k$ and Lipschitz maps $f_i:B_i\to E$ such that
\[
\Haus{k}(S\setminus\cup_i f_i(B_i))=0.
\]
More precisely, we can also find by an exhaustion argument compact
sets $K_i\subset\R^k$ and $f_i:K_i\to E$ Lipschitz such that
$f_i(K_i)$ are pairwise disjoint and $\Haus{k}(S\setminus\cup_i
f_i(K_i))=0$. Furthermore, possibly refining once more the
partition, one can assume that $f_i:K_i\to f_i(K_i)$ are invertible
with a Lipschitz inverse (in short, bi-Lipschitz), see
\cite[Lemma~4.1]{ak2}. In the case $k=0$ we identify countably
$\Haus{k}$-rectifiable sets with finite or countable sets.

\begin{definition}[Rectifiable and integer rectifiable currents]
We say that $T\in MF_k(E)$ with finite mass is rectifiable if
$\|T\|$ vanishes on $\Haus{k}$-negligible sets and it is
concentrated on a countably $\Haus{k}$-rectifiable set. We say that
$T$ is integer rectifiable if, in addition, for all $\varphi\in {\rm
Lip}(E,\R^k)$ and all Borel sets $A$ it holds $\varphi_\sharp (T\res
A)=\segop\theta\segcl$ for some integer valued $\theta\in
L^1(\R^k)$.
\end{definition}

In the case $k=0$ rectifiable currents are finite or countable
series of Dirac masses, with integer coefficients in the integer
case, see \cite[Theorem~4.3]{ak2}. In this latter case, finiteness
of mass implies that the sum is finite.

We shall denote by $\rc{k}{E}$ the space of integer rectifiable
currents. We shall also denote by $\ic{k}{E}$ the subspace
$$
\ic{k}{E}:=\left\{T\in\rc{k}{E}:\ \partial T\in\rc{k-1}{E}\right\}.
$$

In connection with integer rectifiable currents, let us recall the
following important result (see \cite[Theorem~8.6]{ak2}):

\begin{theorem}[Boundary rectifiability]\label{tbrett}
If $T$ is integer rectifiable and has boundary with finite mass,
then $\partial T$ is integer rectifiable.
\end{theorem}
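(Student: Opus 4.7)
The plan is to argue by induction on the dimension $k$, using the slicing identity $\langle\partial T,u,r\rangle=-\partial\langle T,u,r\rangle$ and the coarea inequality \eqref{localization} to reduce the integrality of $\partial T$ to that of lower-dimensional boundaries already settled by the induction.

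For the base case $k=1$, any $T\in\rc{1}{E}$ decomposes as a countable sum $\sum_i (f_i)_\sharp\segop\theta_i\segcl$, where $f_i:K_i\subset\R\to E$ is bi-Lipschitz on pairwise disjoint compact pieces and $\theta_i\in L^1(K_i,\Z)$. Each integer-valued $L^1$ function on the line is a countable combination of characteristic functions of intervals, so $\partial\segop\theta_i\segcl$ is a signed integer combination of Dirac masses at the interval endpoints; the same survives, possibly after cancellations, under the pushforward by $f_i$. The hypothesis ${\bf M}(\partial T)<\infty$ forces the sum of these contributions to be absolutely convergent, giving $\partial T\in\rc{0}{E}$.

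For the inductive step, fix $T\in\rc{k}{E}$ with ${\bf M}(\partial T)<\infty$ and assume the statement in dimension $k-1$. Integrating \eqref{localization} against $\|T\|$ and $\|\partial T\|$, for a rich family of Lipschitz functions $u$ one finds that, for $\Leb{1}$-a.e.\ $r$, $\langle T,u,r\rangle\in\rc{k-1}{E}$ (via the slice theorem for integer rectifiable currents, Proposition~\ref{pslicei}) and $\partial\langle T,u,r\rangle=-\langle\partial T,u,r\rangle$ has finite mass. The inductive hypothesis applied to $\langle T,u,r\rangle$ then yields $\partial\langle T,u,r\rangle\in\rc{k-2}{E}$ for $\Leb{1}$-a.e.\ $r$; equivalently, every Lipschitz slice of $\partial T$ is integer rectifiable.

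The remaining and most delicate step is to deduce that $\partial T$ itself is integer rectifiable from the integer rectifiability of its slices. The ``measure-theoretic'' half -- showing $\|\partial T\|$ is concentrated on a countably $\hh{k-1}$-rectifiable set and vanishes on $\hh{k-1}$-negligible sets -- follows from a density argument based on \eqref{localization}. The harder half is the integrality: one must verify $\varphi_\sharp((\partial T)\res A)=\segop\theta\segcl$ with integer $\theta\in L^1(\R^{k-1})$ for every Lipschitz $\varphi:E\to\R^{k-1}$ and every Borel $A$. Unlike in the Euclidean case, no deformation theorem is available; instead I would iterate the slicing identity coordinate by coordinate to reduce this condition to the 0-dimensional case already established, exploiting the bi-Lipschitz parametrizations of the rectifiable carrier guaranteed by \cite[Lemma~4.1]{ak2}. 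This slice-to-current characterization is the main obstacle and is the content of the corresponding theorem in \cite{ak2}.
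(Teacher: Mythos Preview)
The paper does not prove this theorem at all: it is simply recalled from \cite[Theorem~8.6]{ak2} as background, so there is no in-paper argument to compare your proposal against. What remains is whether your sketch stands on its own, and it does not.

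Your base case is broken. In the decomposition $T=\sum_i (f_i)_\sharp\segop\theta_i\chi_{K_i}\segcl$ the compact sets $K_i\subset\R$ are not intervals, so $\theta_i\chi_{K_i}$ need not be $BV$ and the individual summands need not have finite boundary mass; even when they do, the cancellation that makes ${\bf M}(\partial T)<\infty$ may occur \emph{between} different pieces, so nothing forces $\sum_i{\bf M}(\partial (f_i)_\sharp\segop\theta_i\chi_{K_i}\segcl)$ to be finite. (Already for $T=\segop\chi_{[0,1]}\segcl$ split into countably many subintervals, the piecewise boundary masses diverge while $\partial T=\delta_1-\delta_0$.) The assertion ``the hypothesis ${\bf M}(\partial T)<\infty$ forces the sum of these contributions to be absolutely convergent'' is therefore unjustified.

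In the inductive step you correctly locate the real difficulty---passing from integer rectifiability of all slices of $\partial T$ to integer rectifiability of $\partial T$ itself---and then defer it to ``the corresponding theorem in \cite{ak2}''. But that is precisely the substance of the boundary rectifiability theorem (and of the slice/closure machinery that underlies it); invoking it here is circular unless you name an \emph{independent} statement in \cite{ak2} (e.g.\ the $0$-dimensional slice criterion for integer rectifiability, proved prior to and separately from Theorem~8.6 there) and verify its hypotheses. As written, your inductive step is an outline of the \cite{ak2} strategy rather than a proof.
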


If $E$ is a closed convex subset of a Banach space the slicing
operator makes sense in $\rc{k}{E}$, thanks to Proposition~\ref{plindenstrauss},
and it enjoys the following properties (see \cite[Theorem~5.7]{ak2}):

\begin{proposition}[Slices of integer rectifiable
currents]\label{pslicei} Let $E$ be a closed convex subset of
a Banach space, $T\in\rc{k}{E}$ and $u\in {\rm
Lip}(E)$. Then $\langle T,u,r\rangle\in\rc{k-1}{E}$ for
$\Leb{1}$-a.e. $r\in\R$ and
$$
T\res du=\int_{{\bf R}}\langle T,u,r\rangle\,dr,\qquad \|T\res
du\|=\int_{{\bf R}}\|\langle T,u,r\rangle\|\,dr.
$$
\end{proposition}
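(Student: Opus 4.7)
The plan is to reduce the entire statement to the Euclidean case via the bi-Lipschitz structure theorem for countably $\Haus{k}$-rectifiable sets. By the refined decomposition recalled just before the proposition and \cite[Lemma~4.1]{ak2}, any $T\in\rc{k}{E}$ can be written as an absolutely convergent series $T=\sum_i f_{i,\sharp}\segop\theta_i\segcl$, where $f_i\colon K_i\to E$ is bi-Lipschitz on a compact set $K_i\subset\R^k$, the sets $K_i$ are pairwise disjoint, the images $f_i(K_i)$ are pairwise disjoint, each $\theta_i\in L^1(\R^k)$ is integer-valued and supported in $K_i$, and $\|T\|=\sum_i f_{i,\sharp}(|\theta_i|\Leb{k})$, with $\sum_i {\bf M}(f_{i,\sharp}\segop\theta_i\segcl)={\bf M}(T)<\infty$.

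The second step is to observe that the slice operator commutes with Lipschitz push-forwards: from the defining formula $\langle T,u,r\rangle=\partial(T\res\{u<r\})-(\partial T)\res\{u<r\}$ together with the identities $\partial(f_\sharp S)=f_\sharp\partial S$ and $(f_\sharp S)\res\{u<r\}=f_\sharp(S\res\{u\circ f<r\})$, one obtains
\[
\langle f_\sharp S,u,r\rangle = f_\sharp\langle S,u\circ f,r\rangle
\]
for any Lipschitz $f\colon X\to Y$, any $S\in MF_k(X)$ with boundary of finite mass, and any $u\in{\rm Lip}(Y)$. The problem then reduces to slicing each $\segop\theta_i\segcl$ by the Lipschitz function $v_i:=u\circ f_i$ on $K_i$, which is the classical Euclidean setting: the coarea formula guarantees that for $\Leb{1}$-a.e. $r$ the slice $\langle\segop\theta_i\segcl,v_i,r\rangle$ is concentrated on the $\Haus{k-1}$-rectifiable level set $\{v_i=r\}$ with integer multiplicity given by $\theta_i$, hence belongs to $\rc{k-1}{\R^k}$ with mass measure $|\theta_i|\Haus{k-1}\res\{v_i=r\}$, and the two identities of the proposition hold for each single piece. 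Pushing forward by the bi-Lipschitz maps $f_i$, which preserves both integer rectifiability and the mass measure in view of \eqref{area}, and summing over $i$ then yields $\langle T,u,r\rangle\in\rc{k-1}{E}$ for $\Leb{1}$-a.e. $r$ together with both identities $T\res du=\int_\R\langle T,u,r\rangle\,dr$ and $\|T\res du\|=\int_\R\|\langle T,u,r\rangle\|\,dr$.

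The main obstacle is handling the countable decomposition: one must check that the $\Leb{1}$-null exceptional sets arising for each index $i$ aggregate into a single null set (straightforward, being a countable union), and that no mass is lost or double-counted when gluing the pieces back together. The disjointness of the images $f_i(K_i)$ combined with the decomposition $\|T\|=\sum_i f_{i,\sharp}(|\theta_i|\Leb{k})$ addresses the second point, since the slice mass measures inherit the same disjoint support structure; a monotone convergence argument on partial sums of the absolutely convergent mass series justifies the interchange of sum and Lebesgue integral in $r$, which is the only genuinely delicate point of the reduction.
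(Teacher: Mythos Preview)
The paper does not prove this proposition; it simply cites \cite[Theorem~5.7]{ak2}. Your strategy---decompose $T$ via bi-Lipschitz charts and reduce to Euclidean slicing---is in fact the route taken in \cite{ak2}, so the overall plan is sound.

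There is, however, a genuine gap in the execution. You derive the commutation identity $\langle f_\sharp S,u,r\rangle = f_\sharp\langle S,u\circ f,r\rangle$ from the defining formula $\langle S,v,r\rangle=\partial(S\res\{v<r\})-(\partial S)\res\{v<r\}$, and you correctly note this requires $\partial S$ to have finite mass. But you then apply it to $S=\segop\theta_i\segcl$ with $\theta_i$ merely in $L^1(\R^k;\Z)$: a general integer-valued $L^1$ function is not of bounded variation, so $\partial\segop\theta_i\segcl$ need not have finite mass, and neither side of your commutation identity is defined by the formula you invoke. The appeal to ``the classical Euclidean setting'' does not rescue this: the coarea formula tells you the level sets $\{v_i=r\}$ are $\Haus{k-1}$-rectifiable, but it does not by itself produce a slice current for $\segop\theta_i\segcl$ when $\theta_i\notin BV$. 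The fix is the one used both in \cite{ak2} and in the proof of Proposition~\ref{plindenstrauss} here: approximate each $\theta_i$ in $L^1$ by $\theta_i'\in BV(\R^k;\Z)$, so that $\segop\theta_i'\segcl\in\ic{k}{\R^k}$ and your commutation argument is legitimate, and then pass to the limit using the stability of slices under flat (equivalently, mass) convergence as in \eqref{cachan}. This approximation step is precisely what your outline omits, and without it the reduction to the Euclidean coarea formula is circular.
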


It turns out the the minimal (in $\Haus{k}$-measure) set $S$ on
which $T$ is concentrated is
\begin{equation}\label{ST}
S_T:=\left\{x\in E:\ \liminf_{r\downarrow 0}r^{-k} \|T\|(B_r(x))>0
\right\}.
\end{equation}

\subsection{Multiplicity of integer rectifiable currents and reductions $\modp$}

The \emph{multiplicity} $\theta$ of a rectifiable current $T\in
MF_k(E)$ can be defined as follows: when $E=\R^k$ the multiplicity
of $\segop\theta\segcl$ is $\theta$; in general, let us represent a
Borel set $S$ on which $\|T\|$ is concentrated (i.e.
$\|T\|(E\setminus S)=0$) as $\cup_i f_i(K_i)$ with $K_i\subset\R^k$
compact, $f_i:K_i\to f_i(K_i)$ bi-Lipschitz and $f_i(K_i)$ pairwise
disjoint. Then, denoting by $g_i:E\to\R^k$ Lipschitz maps such that
$g_i\circ f_i(x)=x$ on $K_i$, we define $\theta(y)$ at $y\in
f_i(K_i)$ as the multiplicity of $(g_i)_\sharp (T\res f_i(K_i))$ at
$g_i(y)\in K_i$. Using \eqref{area} it is not difficult to check
that this definition is well posed on $S$ up to the sign and up to
$\Haus{k}$-negligible sets, i.e. that $|\theta|$ does not depend on
the chosen partition and on the Lipschitz maps $f_i$ up to
$\Haus{k}$-negligible sets (when $E$ is a linear space see also \S9
of \cite{ak2} for a definition of multiplicity closer to the one of
the Federer-Fleming theory; since this definition uses the quite
technical concept of approximate tangent space here we avoid it).
Notice also that we allow, for simplicity, the multiplicity to
vanish: but the multiplicity is nonzero $\Haus{k}$-a.e. on the set
$S_T$.

If $m\in\Z$ we call reduction of $m$ $\modp$ an integer $\tilde{m}$
which minimizes $|q|$ among all $q\in [-p/2,p/2]$ with $m-q\in p\Z$.
The integer $\tilde{m}$ is possibly not unique if $p$ is even (for
instance $\widetilde{-1}=-1$ or $\widetilde{-1}=1$ if $p=2$),
nevertheless $|\tilde{m}|$ is uniquely determined, and
$|\widetilde{-m}|=|\tilde{m}|$.

We define \emph{reduction of $T$ $\modp$} a current obtained from
$T$ by taking the reduction of its multiplicity $\modp$, namely
$$
T^p:=\sum_{i=1}^\infty (f_i)_\sharp \segop (\tilde{\theta}\circ
f_i)\chi_{K_i}\segcl
$$
whenever $T=\sum_i (f_i)_\sharp\segop(\theta\circ
f_i)\chi_{K_i}\segcl$. Obviously any reduction $T^p$ has integer
multiplicity in $[-p/2,p/2]$ and it is equivalent to $T$ $\modp$.
The reduction is not unique, because of the ambiguity on the sign of
the multiplicity and on the choice of the reduction from $\Z$ to
$[-p/2,p/2]$, but since $|\widetilde{-m}|=|\tilde{m}|$ it turns out
that $|\tilde\theta|$ is nonzero and uniquely determined by $T$ on
$S_T$, up to $\Haus{k}$-negligible sets.

The following proposition shows that elements of $\fcp{0}{E}$ are
equivalence classes of currents in $\rc{0}{E}$ and provides a basic
lower semicontinuity property.

\begin{proposition}[Characterization of $\fcp{0}{E}$]\label{pscuola}
Let $E$ be a compact length space, let $[R]\in\fcp{0}{E}$ and let
$T_h\in\rc{0}{E}$ be such that $[T_h]\to [R]$ in $\fcp{0}{E}$ and
$\sup_h\|T_h\|(E)$ is finite. Then there exists $T\in\rc{0}{E}$ such
that $[T]=[R]$ and $\liminf\|T_h^p\|(E)\geq\|T\|(E)$.
\end{proposition}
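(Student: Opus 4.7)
\emph{Plan.} I would exploit the fact that $0$-dimensional integer rectifiable currents with finite mass are finite signed sums of Dirac masses with nonzero integer multiplicities (Theorem~4.3 of \cite{ak2}). The first step is the observation that for every $m\in\Z$ the reduction modulo $p$ satisfies $|\tilde m|\leq|m|$, so $\|T_h^p\|(E)\leq\|T_h\|(E)$ and the reduced masses are uniformly bounded. Since the multiplicities of $T_h^p$ lie in $\{-\lfloor p/2\rfloor,\ldots,\lfloor p/2\rfloor\}\setminus\{0\}$, in particular have absolute value at least $1$, the number of atoms of $T_h^p$ is uniformly bounded.

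Next I would extract a first subsequence along which $\|T_h^p\|(E)$ converges to $L:=\liminf_{h\to\infty}\|T_h^p\|(E)$, and after padding, write $T_h^p=\sum_{j=1}^{N_0}\theta_{h,j}\segop x_{h,j}\segcl$ with a uniform number of atoms $N_0$. Since each coefficient takes only finitely many integer values and $E$ is compact, a diagonal extraction yields $\theta_{h,j}=\theta_j$ eventually and $x_{h,j}\to x_j\in E$ for every $j$. The natural candidate limit is then
\[
T:=\sum_{j=1}^{N_0}\theta_j\segop x_j\segcl\in\rc{0}{E},
\]
where coinciding $x_j$'s are grouped into a single atom whose multiplicity is the sum of the corresponding $\theta_j$'s (so $T$ may have strictly fewer atoms than $T_h^p$, some even cancelling out).

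The crucial step is to upgrade this pointwise convergence of atoms to $\fflat_p$-convergence of currents, and this is where the length-space hypothesis enters. Given $\varepsilon>0$ I join $x_{h,j}$ to $x_j$ by a Lipschitz curve of length at most $d(x_{h,j},x_j)+\varepsilon$; the associated $1$-current $S_{h,j}\in\rc{1}{E}$ has mass at most $d(x_{h,j},x_j)+\varepsilon$ and boundary $\segop x_{h,j}\segcl-\segop x_j\segcl$, whence $\fflat(\segop x_{h,j}\segcl-\segop x_j\segcl)\leq d(x_{h,j},x_j)+\varepsilon$. Letting $\varepsilon\downarrow 0$ and summing against the $\theta_j$'s gives $\fflat(T_h^p-T)\to 0$, and therefore $\fflat_p(T_h^p-T)\to 0$. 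Since $T_h-T_h^p=pQ_h$ with $Q_h\in\rc{0}{E}$, we have $[T_h^p]=[T_h]$ in $\fcp{0}{E}$, so the hypothesis $[T_h]\to[R]$ forces $[T]=[R]$. The mass inequality follows at once from the triangle inequality applied to coinciding atoms:
\[
\|T\|(E)=\sum_{y}\Bigl|\sum_{j:\,x_j=y}\theta_j\Bigr|\leq\sum_{j}|\theta_j|=\lim_{h\to\infty}\|T_h^p\|(E)=L,
\]
which is exactly the claim. The only real obstacle is this passage from convergence of atoms to $\fflat_p$-convergence of currents, for which the length-space structure (providing short $1$-dimensional fillings of $\segop x_{h,j}\segcl-\segop x_j\segcl$) appears indispensable; in a general compact metric space one would only have weak convergence of the associated measures, which does not suffice to identify the equivalence class $[T]$ in $\fcp{0}{E}$.
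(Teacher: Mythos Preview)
Your argument is correct and follows essentially the same route as the paper's proof: reduce the $T_h$ modulo $p$ so that the multiplicities lie in $[-p/2,p/2]\setminus\{0\}$, use the uniform mass bound to control the number of atoms, extract a subsequence so that atoms and multiplicities converge, and then exploit the length-space structure to connect $x_{h,j}$ to $x_j$ by short $1$-currents, yielding $\fflat$-convergence and hence $\fflat_p$-convergence to the candidate $T$. The only cosmetic differences are that the paper passes to a subsequence with constant $N_h=N$ rather than padding, and invokes geodesics directly (which exist in a compact length space) rather than $\varepsilon$-approximate curves.
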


\noindent {\sc Proof.} We assume without loss of generality that the
$\liminf$ is a finite limit and write
\[
T_h=\sum_{i=1}^{N_h}\theta_{h,i}\delta_{x(h,i)}
\]
with $\theta_{h,i}\in\Z\setminus\{0\}$. We can also assume, possibly
replacing $T_h$ by their reductions, that $\theta_{h,i}\in
[-p/2,p/2]$, so that $T_h=T_h^p$. We have $N_h\leq\sup_h\|T_h\|(E)$
and we can assume (possibly extracting once more a subsequence) that
$N_h=N$ is independent of $h$. Furthermore, we can also assume that
$x(h,i)\to x(i)$ as $h\to\infty$ and
\[
\theta_{h,i}=\theta_i\in [-p/2,p/2]\setminus\{0\}\qquad\text{for $h$
large enough}
\]
for all $i=1,\ldots,N$. Since $E$ is a length space we can find
currents $G_{h,i}\in\ic{1}{E}$ (induced by geodesics joining
$x(h,i)$ to $x_i$) with $\partial
G_{h,i}=\delta_{x(h,i)}-\delta_{x(i)}$ and ${\bf M}(G_{h,i})\to 0$,
for $i=1,\ldots,N$. Since
$T_h-\sum_i\theta_i\delta_{x_i}=\sum\partial G_{h,i}$, it turns out
that
\[
\fflat(T_h-\sum_{i=1}^N\theta_i\delta_{x(i)})\to 0,
\]
whence $[R]=[\sum_1^N\theta_i\delta_{x_i}]$ $\modp$. Also, it
follows that
\[
\|\sum_{i=1}^N\theta_i\delta_{x_i}\|(E)\leq\sum_{i=1}^N|\theta_i|\leq\liminf_{h\to\infty}
\sum_{i=1}^N|\theta_{h,i}|=\liminf_{h\to\infty}\|T_h\|(E).
\]
\qed

In the next theorem we characterize ${\bf M}_p$ on $\rc{k}{E}$.

\begin{theorem}\label{teosabato}
Let $T\in\rc{k}{E}$, with $E$ compact length space. Then ${\bf
M}_p(T)=\|T^p\|(E)$, where $T^p$ is any reduction of $T$ modulo $p$.
In particular, the additivity property holds with $\|T\|_p=\|T^p\|$.
\end{theorem}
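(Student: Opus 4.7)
The theorem reduces to showing \({\bf M}_p(T) = \|T^p\|(E)\) for every \(T\in\rc{k}{E}\): the measure identity \(\|T\|_p = \|T^p\|\) then follows by applying the total-mass identity to the restrictions \(T\res\{u<r\}\), whose reductions are \(T^p\res\{u<r\}\), via the additivity characterization of Theorem~\ref{teomassp}. The upper bound \({\bf M}_p(T)\leq\|T^p\|(E)\) is immediate: since \(T - T^p = pQ\) for some \(Q\in\rc{k}{E}\) (coming from dividing the excess multiplicity by \(p\)), we have \(\fflat_p(T - T^p)=0\), and the constant sequence \(T_h = T^p\) is admissible in~\eqref{mp}.

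For the lower bound, take \(T_h\in\rc{k}{E}\) with \(\fflat_p(T_h - T)\to 0\) and \({\bf M}(T_h)\to{\bf M}_p(T)\); replacing each \(T_h\) by its reduction modulo \(p\), which cannot increase the mass, we may assume all \(T_h\) have multiplicities in \([-p/2, p/2]\). It suffices to prove lower semicontinuity of \(S\mapsto\|S^p\|(E)\) along \(\fflat_p\)-converging sequences in \(\rc{k}{E}\), since then \(\liminf_h\|T_h\|(E)\geq\liminf_h\|T_h^p\|(E)\geq\|T^p\|(E)\). The case \(k=0\) is handled by Proposition~\ref{pscuola} together with a flat-\(p\) rigidity argument: pairing with narrow Lipschitz bump functions shows that a single nonzero reduced Dirac mass has positive \(\fflat_p\), forcing any two reduced 0-dimensional rectifiable currents in the same class in \(\fcp{0}{E}\) to share support and agree in absolute multiplicities at each point.

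For general \(k\), my plan is a localization argument on \(S_T\). Decompose \(S_T\) into disjoint bi-Lipschitz pieces \(f_i(K_i)\) with \(K_i\subset\R^k\) compact. For fixed \(\eps>0\), metric differentiability of the \(f_i\) lets us shrink to sub-pieces on which \(f_i\) is \((1+\eps)\)-bi-Lipschitz with Jacobian pointwise in \([1-\eps, 1+\eps]\); extend each \(f_i^{-1}\) to a Lipschitz map \(\iota_i:E\to\R^k\) with \(\mathrm{Lip}(\iota_i)\leq 1+\eps\), and choose disjoint open neighborhoods \(V_i\) of the sub-pieces covering a set of \(\|T^p\|\)-measure arbitrarily close to \(\|T^p\|(E)\). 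Since pushforward is Lipschitz, \((\iota_i)_\sharp(T_h\res V_i)\to(\iota_i)_\sharp(T\res V_i)\) in \(\fflat_p\) in \(\R^k\). In the top-dimensional case in \(\R^k\), rectifiable currents satisfy \(\fflat_p(S) = \|S^p\|(\R^k)\) directly (by minimizing the \(L^1\)-norm over integer shifts of the density), whence \(S\mapsto\|S^p\|(\R^k)\) is continuous under \(\fflat_p\). Combining
\[
\|T_h\|(E)\geq\sum_i\|T_h\res V_i\|(E)\geq(1+\eps)^{-k}\sum_i\|(\iota_i)_\sharp(T_h\res V_i)\|(\R^k)
\]
with this continuity and the area-formula estimate \(\|((\iota_i)_\sharp(T\res V_i))^p\|(\R^k)\geq(1-\eps)\|T^p\|(V_i)\), passing to the liminf and then letting \(\eps\downarrow 0\) yields \(\liminf_h\|T_h\|(E)\geq\|T^p\|(E)\). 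The main obstacle will be constructing these near-isometric Lipschitz extensions with the required Jacobian control, for which I would invoke metric differentiability theory à la Kirchheim together with the area formula for rectifiable currents in metric spaces.
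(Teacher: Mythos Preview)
Your overall plan diverges from the paper's, and the divergence is exactly where the gap lies. The paper argues by induction on $k$: it embeds $E$ isometrically into a compact convex subset of a Banach space, proves the open-set lower semicontinuity $\|T^p\|(A)\le\liminf_h\|T_h^p\|(A)$ for $k=0$ from Proposition~\ref{pscuola}, and for the induction step reduces to the identity $\|T^p\res du\|(A)=\int_{\R}\|\langle T^p,u,r\rangle\|(A)\,dr$ together with Proposition~\ref{pmasso}. Slicing drops the dimension by one, and Fatou plus the inductive hypothesis closes the loop. No chart, no extension, no Jacobian control is needed.

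Your localization-and-projection scheme, by contrast, hinges on two claims that do not survive in a general compact length space (or even a Banach space). First, the extension $\iota_i:E\to\R^k$ of $f_i^{-1}$ with $\mathrm{Lip}(\iota_i)\le 1+\eps$: coordinatewise McShane only gives a factor $\sqrt{k}$ for the Euclidean target, and there is no general mechanism to do better; metric differentiability controls $f_i$ on its domain $K_i\subset\R^k$, not global extensions of its inverse from $E$. Without $\mathrm{Lip}(\iota_i)$ close to $1$ the inequality $\|T_h\res V_i\|(E)\ge(1+\eps)^{-k}\|(\iota_i)_\sharp(T_h\res V_i)\|(\R^k)$ degrades by a fixed dimensional constant and the argument no longer closes. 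Second, even granting such an extension, your final comparison $\|((\iota_i)_\sharp(T\res V_i))^p\|(\R^k)\ge(1-\eps)\|T^p\|(V_i)$ tacitly sets the area factor $\lambda_S$ in $\|T^p\|=\lambda_S|\tilde\theta|\,\Haus{k}\res S_T$ equal to $1$; in the metric setting $\lambda_S$ depends on the approximate tangent norm of $S_T$ (see \cite[Lemma~9.2, Theorem~9.5]{ak2}) and is in general bounded only below by $k^{-k/2}$, so the two sides are off by a factor you have not controlled. A smaller issue is that $T_h\res V_i\to T\res V_i$ in $\fflat_p$ is not automatic for arbitrary open $V_i$; you would need to take $V_i=\{u_i<r_i\}$ for generic $r_i$ via~\eqref{cachanbis}, which is fixable but should be said.

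In short, the very obstacle you flag at the end is genuine and is not resolved by metric differentiability or the area formula alone. The paper's slicing induction is designed precisely to bypass it: it never leaves the intrinsic mass and never needs a near-isometric projection to $\R^k$.
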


\noindent {\sc Proof.} The inequality ${\bf M}_p(T)\leq\|T^p\|(E)$ is
obvious, because $T^p=T$ $\modp$. We shall prove the converse
inequality by induction on $k$. Without loss of generality we can
assume that $E$ is a compact convex subset of a Banach space (indeed,
an isometric embedding does not increase the ${\bf M}_p$ mass, while
leaving $\|T^p\|(E)$ unchanged). The inequality is equivalent to the
lower semicontinuity of $T\mapsto\|T^p\|(E)$ under
$\fflat_p$-convergence. More generally, we shall prove by induction on
$k$ that
\[
\|T^p\|(A)\leq\liminf_{h\to\infty}\|T^p_h\|(A)
\]
for all open sets $A\subset E$ whenever $\fflat_p(T_h-T)\to 0$.

\noindent ($k=0$). Let $T\in\rc{0}{E}$ and let $T_h\in\rc{0}{E}$ be
satisfying $\fflat_p(T_h-T)\to 0$; we fix an open set $A\subset E$
and we assume with no loss of generality that the $\liminf$ above is
a limit and that $T_h=T_h^p$. Then, we are allowed to extract
further subsequences and we can assume that the fast convergence
condition $\sum_h\fflat_p(T_h-T)<\infty$ holds. Let $u$ be the
distance function from $E\setminus A$ and apply for $\Leb{1}$-a.e.
$r>0$ Proposition~\ref{pscuola} to $T_h\res\{u>r\}$ and
$[T\res\{u>r\}]$ to obtain the existence of $S_r\in\rc{0}{E}$ with
$S_r=T\res\{u>r\}$ $\modp$ and
$$
\|S_r\|(E)\leq\liminf_{h\to\infty}\|T_h\|(\{u>r\}).
$$
Since $S_r=T^p\res\{u>r\}$ $\modp$ as well, it follows that
$$
\|T^p\|(\{u>r\})\leq\|S_r\|(E)\leq\liminf_{h\to\infty}\|T_h\|(\{u>r\})
\leq\liminf_{h\to\infty}\|T_h\|(A).
$$
Letting $r\downarrow 0$ the lower semicontinuity property on $A$
follows.

\noindent (Induction step). Let us prove that the induction
assumption gives
\[
\liminf_h\|T_h^p\res du\|(A)\geq\|T^p\res du\|(A)
\]
whenever $T_h\to T$ in $\fcp{k}{E}$.  Indeed, assuming with no loss of
generality that
\[
\sum_h\fflat_p(T_h-T)<\infty,
\]
we know from the definition of the slice operator and
\eqref{cachanbis} that
$$
\lim_{h\to\infty}\langle T_h,u,r\rangle=\langle T,u,r\rangle \qquad
\text{in $\fcp{k}{E}$}
$$
for $\Leb{1}$-a.e. $r\in\R$, hence Proposition~\ref{pslicei} gives
\begin{eqnarray*}
\liminf_{h\to\infty}\|T^p_h\res du\|(A)&=&
\liminf_{h\to\infty}\int_{{\bf R}} \|\langle
T_h^p,u,r\rangle\|(A)\,dr  \geq \int_{{\bf R}}
\liminf_{h\to\infty}\|\langle T_h^p,u,r\rangle\|(A)\,dr\\
&\geq&\int_{{\bf R}}\|\langle T^p,u,r\rangle\|(A)\,dr= \|T^p\res
du\|(A).
\end{eqnarray*}
By applying Proposition~\ref{pmasso} to $T^p\res A$ we have
$$
\|T^p\|(A)= \sup\left\{ \sum_{i=1}^N \|T^p\res
d\pi^i\|(A_i)\right\},
$$
where the supremum runs among all finite disjoint families of open
sets $A_1,\ldots,A_N\subset A$ and all $N$-ples of $1$-Lipschitz
maps $\pi^i$. By the previous step all the finite sums are lower
semicontinuous with respect to $\fflat_p$ convergence, whence the
lower semicontinuity of $T\mapsto\|T^p\|(A)$ follows.

\noindent This concludes the proof of the equality ${\bf
M}_p(T)=\|T^p\|(E)$. Since for $T\in\rc{k}{E}$ and $u\in {\rm
Lip}(E)$ it holds
$$
(T\res\{u<r\})^p=T^p\res\{u<r\} \qquad\text{for $\Leb{1}$-a.e.
$r\in\R$},
$$
it follows that the additivity property is fulfilled with
$\|T\|_p:=\|T^p\|$. \qed

\subsection{Isoperimetric inequalities $\modp$}

Having defined $\rc{k}{E}$, we define
$$
\rcp{k}{E}:=\left\{[T]:\ T\in\rc{k}{E}\right\}.
$$
An open problem, in connection with the ${\bf M}_p$, mass is the
validity of the analogous of \eqref{comedia}, namely
$$
\left\{[T]\in\fcp{k}{E}:\ {\bf M}_p([T])<\infty\right\}=\rcp{k}{E}.
$$
We plan to investigate this in \cite{ambkatzwen}.

We also define
\begin{equation}\label{icpke}
\icp{k}{E}:=\left\{[T]:\ [T]\in\rcp{k}{E},\,\,
[\partial T]\in\rcp{k}{E}\right\}.\end{equation}

\begin{theorem} \label{tcheck}
Let $E$ be a compact convex subset of a separable Banach space. Then
${\bf M}_p$ and $\icp{k}{E}$, as defined in \eqref{mp} and
\eqref{icpke} respectively, satisfy conditions (i), (ii), (iii),
(iv) of Section~\ref{swenger} with constants depending on $k$ only.
\end{theorem}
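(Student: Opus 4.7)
The strategy is to verify conditions (i)--(iv) of Section~\ref{swenger} separately with $\iicp{k}{E}:=\icp{k}{E}$ and the $p$-mass from~\eqref{mp}. The central reduction is Theorem~\ref{teosabato}, which identifies ${\bf M}_p(T)$ with $\|T^p\|(E)$ for any reduction $T^p\in\rc{k}{E}$; this lets us translate assertions about ${\bf M}_p$ and the associated measure $\|T\|_p=\|T^p\|$ into statements about the classical mass of integer rectifiable currents, where the tools of~\cite{ak2} apply directly.

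For~(i), represent $[T]\in\icp{k}{E}$ by a reduction $T^p$. Proposition~\ref{pslicei} gives $\langle T^p,u,r\rangle\in\rc{k-1}{E}$ for $\Leb{1}$-a.e.\ $r\in\R$, and its boundary $-\langle\partial T^p,u,r\rangle$ is again rectifiable since $\partial T^p\in\rc{k-1}{E}$; hence $\langle[T],u,r\rangle\in\icp{k-1}{E}$ after passing to classes. The coarea inequality~\eqref{coarea} then follows from the identities $(T\res\{u<r\})^p=T^p\res\{u<r\}$ and $(\langle T,u,r\rangle)^p=\langle T^p,u,r\rangle$ for a.e.~$r$ (already used in the proof of Theorem~\ref{teosabato}), combined with the classical slicing mass inequality ${\bf M}(\langle T^p,u,r\rangle)\leq {\rm Lip}(u)\tfrac{d}{dr}\|T^p\|(\{u<r\})$ from~\cite{ak2}.

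For~(iii), convexity of $E$ allows a straight-line cone construction: given $[L]\in\icp{k}{E}$ with $\partial[L]=0$ and a reduction $L^p$ supported in a ball $B$ of radius $R$ centered at $x_0$, set $\Phi(t,y):=(1-t)x_0+ty$ and let $C$ be the push-forward under $\Phi$ of the natural product of $L^p$ with the standard $1$-current on $[0,1]$. A direct calculation yields $C\in\rc{k+1}{E}$ with support in $B$, $\partial C=L^p-x_0\!\ast\!\partial L^p$, and ${\bf M}(C)\leq c_k R\,{\bf M}(L^p)$. Since $\partial[L]=0$ forces $\fflat_p(\partial L^p)=0$, the Lipschitz estimate~\eqref{amb3ter} applied to $\Phi$ gives $\fflat_p(x_0\!\ast\!\partial L^p)=0$, so $\partial[C]=[L]$ and ${\bf M}_p([C])\leq c_k R\,{\bf M}_p([L])$ via ${\bf M}_p\leq{\bf M}$ and ${\bf M}(L^p)={\bf M}_p([L])$. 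For~(iv), the classical Ahlfors-type lower density bound for integer rectifiable currents (the multiplicity $|\theta|\geq 1$ is $\|T^p\|$-a.e.\ on $S_{T^p}$, and the density of $\hh{k}$ equals $1$ a.e.\ on any $k$-rectifiable set) applied to $T^p$, together with $\|T\|_p=\|T^p\|$, yields $\liminf_{r\downarrow 0}r^{-k}\|T\|_p(B_r(x))\geq A_k>0$ for $\|T\|_p$-a.e.~$x$, with $A_k$ depending only on $k$.

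The genuine content lies in~(ii), the $1$-dimensional isoperimetric inequality mod~$p$. Here the plan is to import the covering/decomposition argument of~\cite{We}: fix a reduction $L^p$ of $[L]$, choose a basepoint $x_0$ in its support, and slice by $u=d(\cdot,x_0)$ to obtain a family of $0$-dimensional slices whose integrated $p$-mass is controlled by ${\bf M}_p([L])$ via~\eqref{cachanbis}. A greedy matching pairs atoms of slices at successive radii carrying opposite $\Z_p$-multiplicities and joins each pair by a straight segment in the convex set $E$, producing a rectifiable 1-current $G$ of controlled mass with $[\partial G]=[\partial L^p]$; the remainder $[L^p-G]$ then decomposes mod~$p$ into cycles of controlled diameter, each of which is filled by a cone as in~(iii). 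Summing gives ${\bf M}_p([T])\leq c^*{\bf M}_p([L])^2$, with support inside any ball containing that of $[L]$ by construction.

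The main obstacle is~(ii): in Wenger's polyhedral setting~\cite{We1} a 1-cycle is a Lipschitz image of a simplex, making the decomposition purely combinatorial, whereas here a 1-cycle mod~$p$ is a genuinely metric object whose ``atoms'' live in the $\Z_p$-quotient. The delicate points are to ensure that the matching/cone construction produces an \emph{integer} rectifiable filling mod~$p$ (rather than merely a flat chain) and that the resulting constant $c^*$ depends only on $p$ and is independent of the ambient Banach space.
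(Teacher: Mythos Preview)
Your treatment of (i), (iii), (iv) is essentially correct and in places more direct than the paper's (which proves the coarea inequality \eqref{coarea} by approximating $T$ in $\fflat_p$ by currents $S_i\in\ic{k}{E}$, applying the classical inequality to each $S_i$, and passing to the limit via Fatou and lower semicontinuity). Two small fixes are needed. In (i), the assertion ``$\partial T^p\in\rc{k-1}{E}$'' is unjustified: the reduction of a representative of $[T]\in\icp{k}{E}$ need not have rectifiable boundary. You should instead pick a rectifiable representative $S$ of $\partial[T]\in\rcp{k-1}{E}$ and slice that, using $\partial\langle[T],u,r\rangle=-[\langle S,u,r\rangle]$. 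In (iii), the estimate $\fflat_p(x_0*\partial L^p)=0$ does not follow from \eqref{amb3ter}, which concerns push-forward under a single Lipschitz map; you must track what the cone does to a decomposition $\partial L^p=R+\partial S+pQ$ with ${\bf M}(R)+{\bf M}(S)$ small (and cope with the fact that $\partial L^p$ may have infinite mass).

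The genuine gap is (ii). Your greedy-matching plan is different from the paper's argument and, as you concede, not worked out: there is no mechanism to control the total length of the matching segments, no reason the remainder splits into cycles of bounded diameter, and the constant should depend on $k$ only (not on $p$). The paper's route is both shorter and complete. It applies the Vitali-type covering lemma of \cite{We} to the measure $\|L^p\|$, using the density bound (iv) with $A_1=2$, to produce finitely many disjoint balls $B_{2r_i}(y_i)$ with $\|L\|_p(B_{2r_i}(y_i))<r_i$ whose total $\|L\|_p$-measure is at least $\tfrac{1}{5}{\bf M}_p([L])$. The coarea inequality (i) then gives ${\bf M}_p(\langle[L],d(\cdot,y_i),r\rangle)<1$ on a set of positive measure in $(r_i,2r_i)$; since these slices lie in $\icp{0}{E}$, where ${\bf M}_p$ is integer-valued, they must vanish. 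Hence each $[L_i]:=[L]\res\{d(\cdot,y_i)<\eta_i\}$ is itself a cycle with ${\rm diam}({\rm supp}[L_i])\leq 8{\bf M}_p([L_i])$, and (iii) cones it off. Iterating removes a fixed fraction of the mass at each step. The key idea you are missing is the integrality of ${\bf M}_p$ on $0$-currents, which converts a strict mass bound into exact vanishing of a slice and makes the decomposition into boundaryless pieces automatic---no atom-by-atom matching is required.
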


\noindent {\sc Proof.} (i) The fact that the slice operator maps
$\rcp{k}{E}$ into $\rcp{k-1}{E}$ follows by the fact the slice
preserves integer rectifiability, see Proposition~\ref{pslicei}.
Since the boundary operator and the slice commute (up to a change of
sign) the slice operator maps also $\icp{k}{E}$ into $\icp{k-1}{E}$.
In order to prove \eqref{coarea} we consider the inequality in an
integral form, namely
\begin{equation}\label{morel1}
\int_a^{*b}{\bf M}_p(\langle [T],u,r\rangle)\,dr\leq {\rm
Lip}(u)(\|T\|_p\bigl(\{u<b\})-\|T\|_p(\{u<a\})\bigr) \qquad
-\infty<a\leq b<+\infty.
\end{equation}
For $S\in\ic{k}{E}$ we can apply \cite[Theorem~5.6]{ak2} to obtain
$$
\int_a^b{\bf M}(\langle S,u,r\rangle)\,dr\leq {\rm
Lip}(u)(\|S\|\bigl(\{u<b\})-\|S\|(\{u<a\})\bigr).
$$
Now, let $(S_i)\subset\ic{k}{E}$ be such that
$\sum_i\fflat_p(S_i-T)<\infty$ and ${\bf M}(S_i)\to {\bf M}_p([T])$;
we have seen in the proof of Theorem~\ref{teomassp} that there
exists an at most countable set $N$ such that ${\bf
M}(S_i\res\{u<r\})\to \|T\|_p(\{u<r\})$ for all $r\in\R\setminus N$;
in addition, the fast convergence assumption ensures that
$\fflat_p(\langle S_i,u,r\rangle-\langle T,u,r\rangle)\to 0$ for
$\Leb{1}$-a.e. $r>0$. So, passing to the limit in the previous
inequality with $S=S_i$, Fatou's lemma and the lower semicontinuity
of ${\bf M}_p$ provide \eqref{morel1} when $a,\,b\notin N$. In the
general case the inequality can be recovered by monotone
approximation.

\noindent (ii) In the proof of this property we shall use properties
(i), (iii) and (iv) which are estabilished independently of (iii).
In the case $k=1$, property (iv) holds with the explicit constant
$A_k=2$; furthermore (iii) holds with $c^*=2$. For all
$[L]\in\icp{1}{E}$ with $\partial [L]=0$ we shall be able to
construct a family of currents $[L_i]$ with the same properties
satisfying
\begin{equation}\label{decompo1}
{\bf M}_p([L]-\sum_{i=1}^\infty[L_i])=0, \qquad {\bf M}_p([L])
=\sum_{i=0}^\infty{\bf M}_p([L_i])
\end{equation}
and ${\rm diam}({\rm supp}([L_i]))\leq 8 {\bf M}_p([L_i])$. Given
this decomposition, an application of property (iii) to all $[L_i]$
provides currents $[T_i]$ with $\partial [T_i]=[L_i]$ and ${\bf
M}_p([T_i])\leq 16\bigl[{\bf M}_p([L_i])\bigr]^2$ and we can apply
the property (iii) to find $[S_N]$ with $\partial
S_N=[L]-\sum_1^N[L_i]$ and ${\bf M}_p([S_N])\to 0$; it turns that
for $N$ large enough the current
$$[T]:=\sum_{i=1}^N[T_i]+S_N$$
has the required property.

In order to achieve the decomposition \eqref{decompo1} it suffices
to find finitely many, say $N$, currents $[L_i]$ with ${\rm
diam}({\rm supp}([L_i]))\leq 8 {\bf M}_p([L_i])$,
\begin{equation}\label{decompo2}
{\bf M}_p([L]-\sum_{i=1}^N[L_i])\leq \frac{4}{5} {\bf M}_p([L]),
\qquad {\bf M}_p([L])={\bf
M}_p([L]-\sum_{i=1}^N[L_i])+\sum_{i=1}^N{\bf M}_p([L_i])
\end{equation}
and then iterate this decomposition (first to $[L]-\sum_1^N[L_i]$
and so on) countably many times. In order to obtain the
decomposition \eqref{decompo2} we apply Lemma~3.2 of \cite{We} with
$F=1/2$ and $\mu=\|T^p\|$ (since $A_1=2>F$ this choice ensures that
for $\mu$-a.e. $x$ there exists $r>0$ such that $\mu(B_r(x))\geq
Fr$) to obtain finitely many points $y_1,\ldots,y_N$ and
corresponding radii $r_i>0$ satisfying:
\begin{itemize}
\item[(a)]  $\mu(B_{r_i}(y_i))\geq Fr_i$ and $\mu(B_s(y_i))<Fs$ for all $s>r_i$;
\item[(b)] the balls $B_{2r_i}(y_i)$ are disjoint;
\item[(c)] $5\sum_1^N\mu(B_{r_i}(y_i))\geq\mu(E)$.
\end{itemize}
Since (a) gives
$$
\int_{r_i}^{*2r_i}{\bf M}_p(\langle [L],d(\cdot,y_i),r\rangle)\,dr
\leq {\bf M}_p([L]\res B_{2r_i}(y_i))<2Fr_i=r_i
$$
we know that ${\bf M}_p(\langle [L],d(\cdot,y_i),r\rangle)<1$ in a
set of positive $\Leb{1}$-measure in $(r_i,2r_i)$. But since the
slices belong to $\icp{0}{E}$ it follows that ${\bf M}_p(\langle
[L],d(\cdot,y_i),r\rangle)=0$ in a set of positive $\Leb{1}$-measure
in $(r_i,2r_i)$. Choosing $\eta_i\in (r_i,2r_i)$ in such a way that
$\langle [L],d(\cdot,y_i),\eta_i\rangle)=0$ we can define
$$
[L_i]:=[L]\res\{d(\cdot,y_i)<\eta_i\},\qquad 1\leq i\leq N.
$$
Our choice of $\eta_i$ ensures that $\partial [L_i]=0$ and property
(b) ensures that the supports of these chains are pairwise disjoint.
Also,
$$
{\rm diam}({\rm supp}([L_i]))\leq 2\eta_i\leq 4r_i\leq
8\mu(B_{r_i}(y_i))\leq 8{\bf M}_p([L_i]).
$$
Property (c) ensures that $5\sum_1^N{\bf M}_p([L_i])\geq {\bf
M}_p([L])$, so that \eqref{decompo2} holds.

\noindent (iii) This can be easily achieved by a cone construction
as, for instance, in \cite[Proposition~10.2]{ak2}. This construction
provides the constant $c^*=2$.

\noindent (iv) If $T\in\rc{k}{E}$ and $T^p$ is a reduction $\modp$,
since its multiplicity is at least $1$ we know by
\cite[Theorem~9.5]{ak2} that
$$
\|T^p\|\geq\lambda\Haus{k}\res S,
$$
where $S=S(T^p)$ is defined in \eqref{ST} with $T^p$ in place of $T$
and $\lambda$ is an ``area factor'' depending only on $S$. In
addition, \cite[Lemma~9.2]{ak2} provides the universal lower bound
$\lambda\geq k^{-k/2}$. Finally, taking into account (see
\cite{kir1}) that any countably $\Haus{k}$-rectifiable set with
finite $\Haus{k}$-measure $S$ satisfies
$$
\liminf_{r\downarrow 0}\frac{\Haus{k}(S\cap B_r(x))}{\omega_kr^k}=
1\qquad\text{for $\Haus{k}$-a.e. $x\in S$,}
$$
with $\omega_k$ equal to the Lebesgue measure of the unit ball in
$\R^k$, we obtain that (iv) holds with $A_k=k^{-k/2}\omega_k$. \qed

As a consequence, we can obtain isoperimetric inequalities in the
case when the cycle belongs to $\icp{k}{E}$ (resp. $\fcp{k}{E}$) and
the filling belongs to $\ic{k+1}{E}$ (resp. $\fcp{k+1}{E}$). In this
connection, notice that in the class of integer multiplicity
currents we have that $L\in\fc{k}{E}$ with finite mass and $\partial
L=0$ implies $L\in\ic{k}{E}$: indeed, writing $L=A+\partial B$ with
$A\in\rc{k}{E}$ and $B\in\rc{k+1}{E}$, we have $\partial A=0$ and so
$A=\partial R$ for some $R\in\ic{k+1}{E}$. Since $L=\partial (R+B)$
the boundary rectifiability theorem gives that $L\in\ic{k}{E}$. We
plan to investigate the boundary rectifiability theorem and further
properties of currents $\modp$ in \cite{ambkatzwen}.

\begin{corollary}[Isoperimetric inequality $\modp$ in $\icp{k}{E}$ and $\fcp{k}{E}$]
\label{tisop1} Let $E$ be a compact convex subset of a separable
Banach space. For $k\geq 1$ there exist constants $\delta_k$ such
that, if $[L]\in\icp{k}{E}$ is a non zero current with bounded
support and $\partial [L]=0$ then
$$
\inf\left\{\frac{{\bf M}_p([T])}{\bigl[{\bf
M}_p([L])\bigr]^{(k+1)/k}}:\
[T]\in\icp{k+1}{E},\,\,\partial[T]=[L]\right\}\leq\delta_k.
$$
The same property holds when $[L]\in\fcp{k}{E}$, taking the infimum
among all $[T]\in\fcp{k+1}{E}$ with $\partial [T]=[L]$.
\end{corollary}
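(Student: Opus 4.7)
For $[L]\in\icp{k}{E}$ the conclusion is immediate: Theorem~\ref{tcheck} verifies axioms (i)--(iv) of Section~\ref{swenger} for the classes $\icp{k}{E}$ together with ${\bf M}_p$, so Theorem~\ref{tisop} applied with $\iicp{k}{E}:=\icp{k}{E}$ produces the required filling $[T]\in\icp{k+1}{E}$ with $\partial[T]=[L]$ and ${\bf M}_p([T])\leq\gamma_k[{\bf M}_p([L])]^{(k+1)/k}$, giving the bound with $\delta_k:=\gamma_k$.

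\textbf{Flat case: approximation by cycles in $\icp{k}{E}$.} For $[L]\in\fcp{k}{E}$ a cycle with finite ${\bf M}_p$, the plan is to approximate by cycles in $\icp{k}{E}$, apply the integer case, and absorb the error by a small flat filling. By the definition \eqref{mp} choose $L_h\in\rc{k}{E}$ with ${\bf M}(L_h)\to{\bf M}_p([L])$ and $\fflat_p(L_h-L)\to 0$; then $\fflat_p(\partial L_h)\to 0$ by \eqref{amb3}, and the definition of $\fflat_p$ yields
\[
\partial L_h=U_h+\partial V_h+pW_h,\qquad U_h\in\rc{k-1}{E},\ V_h\in\rc{k}{E},\ W_h\in\fc{k-1}{E},
\]
with ${\bf M}(U_h)+{\bf M}(V_h)\to 0$. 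Replacing $L_h$ by $L_h-V_h$ reduces to $\partial L_h=U_h+pW_h$; applying $\partial$ gives $\partial U_h=-p\partial W_h$, so $[U_h]\in\icp{k-1}{E}$ is a cycle mod $p$ of $p$-mass at most ${\bf M}(U_h)\to 0$. The cone inequality Theorem~\ref{tcheck}(iii), applied in a ball of radius $\diam(E)$ containing $E$, produces $[A_h]\in\icp{k}{E}$ with $\partial[A_h]=[U_h]$ and ${\bf M}_p([A_h])\leq c_{k-1}\,\diam(E)\,{\bf M}(U_h)\to 0$. Then $[\hat L_h]:=[L_h-V_h]-[A_h]\in\icp{k}{E}$ is a cycle mod $p$ with $[\hat L_h]\to[L]$ in $\fcp{k}{E}$ and ${\bf M}_p([\hat L_h])\to{\bf M}_p([L])$ by the lower semicontinuity of ${\bf M}_p$.

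\textbf{Transfer of the bound.} The integer case supplies $[T_h]\in\icp{k+1}{E}\subset\fcp{k+1}{E}$ with $\partial[T_h]=[\hat L_h]$ and ${\bf M}_p([T_h])\leq\delta_k[{\bf M}_p([\hat L_h])]^{(k+1)/k}$. To convert these into fillings of $[L]$, decompose the flat-small cycle $[\hat L_h]-[L]$ as $[R_h]+\partial[S_h]$ with $R_h\in\rc{k}{E}$, $S_h\in\rc{k+1}{E}$ and ${\bf M}(R_h)+{\bf M}(S_h)\to 0$. Since both $[\hat L_h]$ and $[L]$ are cycles mod $p$, taking $\partial$ shows that $[R_h]\in\icp{k}{E}$ is itself a cycle mod $p$, so a second application of (iii) gives $[R'_h]$ with $\partial[R'_h]=[R_h]$ and ${\bf M}_p([R'_h])\to 0$. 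Hence $[W_h]:=[S_h]+[R'_h]$ satisfies $\partial[W_h]=[\hat L_h]-[L]$ and ${\bf M}_p([W_h])\to 0$; consequently $[T_h]-[W_h]\in\fcp{k+1}{E}$ fills $[L]$ with
\[
{\bf M}_p([T_h]-[W_h])\leq\delta_k\bigl[{\bf M}_p([\hat L_h])\bigr]^{(k+1)/k}+o(1)\longrightarrow\delta_k\bigl[{\bf M}_p([L])\bigr]^{(k+1)/k},
\]
establishing the infimum bound.

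\textbf{Main obstacle.} The crux is the boundary-correction step: turning the generic approximating sequence $(L_h)\subset\rc{k}{E}$ into genuine cycles mod $p$ in $\icp{k}{E}$ without disturbing the asymptotic $p$-mass. This is handled by the cone inequality (iii) of Theorem~\ref{tcheck}, applied to the vanishing-mass boundary defect $[U_h]$ (which is a cycle mod $p$ because $\partial U_h$ is $p$ times another current). The rest is mere repackaging: the integer case applied to the approximants, plus a second use of (iii) to absorb the residual $[\hat L_h]-[L]$ into a flat filling of vanishing mass.
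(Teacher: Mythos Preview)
Your proof is correct, but it takes a more laborious route than the paper's for the flat case. Both of you dispatch $[L]\in\icp{k}{E}$ by appealing to Theorem~\ref{tcheck} (and implicitly Theorem~\ref{teosabato} for the additivity axiom) to invoke Theorem~\ref{tisop}. For $[L]\in\fcp{k}{E}$, however, the paper exploits the density of $\ic{k}{E}$ in $\rc{k}{E}$ (Proposition~\ref{plindenstrauss}) to choose approximants $P_i\in\ic{k}{E}$ rather than merely in $\rc{k}{E}$; writing the single flat decomposition $P_i=L+A_i+\partial B_i+pQ_i$ with ${\bf M}(A_i)+{\bf M}(B_i)\to 0$, one observes that $[P_i-A_i]\in\icp{k}{E}$ is \emph{already} a cycle $\modp$ (since $[\partial(P_i-A_i)]=[\partial L]=0$), with no cone construction needed. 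The integer case then furnishes $[P_i']$ filling $[P_i-A_i]$, and the correction to a filling of $[L]$ is simply $[P_i'-B_i]$, using the very same $B_i$ from the decomposition. By contrast, you start from $L_h\in\rc{k}{E}$, invoke the cone inequality (iii) once to kill the boundary defect $[U_h]$ and manufacture cycles $[\hat L_h]$, and then a second time to build a small filling of $[\hat L_h]-[L]$. What your approach buys is independence from the density of $\ic{k}{E}$; what the paper's buys is brevity, since the single flat decomposition does double duty.

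A minor point: when $k=1$ your first application of (iii) is to a $0$-current $[U_h]$, and the ``$\partial U_h=-p\partial W_h$'' argument should be read as the statement that the augmentation of $U_h$ is a multiple of $p$ (which does follow, since $U_h=\partial L_h-pW_h$ after your replacement). The cone construction for $0$-cycles $\modp$ is elementary (join points by paths), so this is not a gap, but you should be explicit about it.
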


\noindent {\sc Proof.} If $[L]\in\icp{k}{E}$, we want to apply
Theorem~\ref{tisop}. To this aim, it suffices to combine
Theorem~\ref{tcheck} and Theorem~\ref{teosabato}. In the general
case $[L]\in\fcp{k}{E}$, let $P_i\in\ic{k}{E}$ be satisfying
$\fflat_p(P_i-L)\to 0$ and ${\bf M}(P_i)\to {\bf M}_p(L)$. Let us
write
$$
P_i=L+A_i+\partial B_i+pQ_i
$$
with $A_i\in\rc{k}{E}$, $B_i\in\rc{k+1}{E}$, $Q_i\in\fc{k}{E}$ and
${\bf M}(A_i)+{\bf M}(B_i)\to 0$. We have $[\partial P_i]=[\partial
A_i]$, and since $[P_i-A_i]\in\icp{k}{E}$ we can find currents
$[P_i']\in\icp{k+1}{E}$ with $\partial [P_i']=[P_i-A_i]$ and
$$
{\bf M}_p([P_i'])\leq\delta_k\left[{\bf
M}_p([P_i-A_i])\right]^{(k+1)/k}\leq \delta_k\left[{\bf
M}_p([L])\right]^{(k+1)/k}+\omega_i
$$
with $\omega_i$ infinitesimal. It is now immediate to check that
$\partial [P_i'-B_i]=[L]$, so that $[P_i'-B_i]\in\fcp{k+1}{E}$, and
that
$$
\limsup_{i\to\infty}{\bf M}_p([P_i'-B_i])\leq\delta_k\left[{\bf
M}_p([L])\right]^{(k+1)/k}.
$$
\qed

\section{Filling radius inequality}\label{skatz}

In this section we investigate the validity of a filling radius
inequality, which complements the isoperimetric inequality of
Corollary~\ref{tisop1}. To this aim, for $[L]\in\icp{k}{E}$ with
$\partial [L]=0$ we consider the subspace ${\mathcal M}$ defined by
\begin{equation}\label{calmc}
{\mathcal M}:=\left\{[T]\in\fcp{k+1}{E}:\ \partial [T]=[L],\,\,{\bf
M}_p([T])<\infty\right\}.
\end{equation}
By Corollary~\ref{tisop1} ${\mathcal M}$ contains $[\bar
T]\in\icp{k+1}{E}$ with ${\bf M}_p([\bar T])\leq\delta_k \bigl[{\bf
M}_p([L])\bigr]^{(k+1)/k}$.

\begin{theorem}
\label{tkatz1}
Assume that $E$ is a compact convex subset of a separable Banach
space.
 Let $[L]\in\icp{k}{E}$ with
${\bf M}_p([L])<\infty$ and $\partial [L]=0$. Then, the infimum of
the numbers $r$ such that there exists $[T]\in\icp{k+1}{E}$
satisfying $\partial [T]=[L]$ whose support is contained in the
$r$-neighbourhood of ${\rm supp\,}[L]$ is not greater than
$C_k\bigl[{\bf M}_p([L])\bigr]^{1/k}.$\\
The constant $C_k$ depends only on $k$ and on the constant
$\delta_k$ in Corollary~\ref{tisop1}.
\end{theorem}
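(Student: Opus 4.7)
The plan is to follow the strategy sketched in Remark~\ref{33}: apply Ekeland's variational principle to the mass functional $\mathbf{M}_p$ on the set $\mathcal M$ of fillings of $[L]$ to obtain a quasi-minimizer $[T_0]$, and then use a slice-and-replace argument based on Corollary~\ref{tisop1} to show that $\mathrm{supp}\,[T_0]$ must lie close to $\mathrm{supp}\,[L]$. The mechanism is that, if a point $x$ of $\mathrm{supp}\,[T_0]$ were too far from $\mathrm{supp}\,[L]$, one could cut off $[T_0]\res B_\rho(x)$ and replace it by a cheaper ball-confined isoperimetric filling of its boundary slice, contradicting the quasi-minimality of $[T_0]$.

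First I would set up the Ekeland principle on $\mathcal M$. By Corollary~\ref{tisop1} the set $\mathcal M$ from \eqref{calmc} contains a seed $[\bar T]\in\icp{k+1}{E}$ with $\mathbf{M}_p([\bar T])\leq\delta_k[\mathbf{M}_p([L])]^{(k+1)/k}$. Every sublevel set $\{[T]\in\mathcal M:\mathbf{M}_p([T])\leq c\}$ is $\fflat_p$-closed by the lower semicontinuity of $\mathbf{M}_p$ proven in Section~\ref{sdefmp}, hence complete because $\fcp{k+1}{E}$ is complete (Section~\ref{four}). For each $\epsilon>0$, Ekeland's principle then produces $[T_0]\in\mathcal M$ with $\mathbf{M}_p([T_0])\leq(1+\epsilon)\delta_k[\mathbf{M}_p([L])]^{(k+1)/k}$ and the quasi-minimality property
\[
\mathbf{M}_p([T_0])\leq\mathbf{M}_p([T])+\epsilon\,\fflat_p([T]-[T_0])\qquad\forall [T]\in\mathcal M.
\]

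Next I would fix $x\in\mathrm{supp}\,[T_0]$, set $r_0:=d(x,\mathrm{supp}\,[L])$ and $m(\rho):=\|T_0\|_p(B_\rho(x))$. For a.e.\ $\rho\in(0,r_0)$ the identity $\partial([T_0]\res B_\rho(x))=\langle[T_0],d(\cdot,x),\rho\rangle$ holds, since $B_\rho(x)\cap\mathrm{supp}\,[L]=\emptyset$, and by property~(i) this slice lies in $\icp{k}{E}$. Corollary~\ref{tisop1} in its ball-confined form, inherited from the inductive scheme of Theorem~\ref{tisop}, then furnishes $[S_\rho]\in\icp{k+1}{E}$ supported in $B_\rho(x)$ with $\partial[S_\rho]=\langle[T_0],d(\cdot,x),\rho\rangle$ and
\[
\mathbf{M}_p([S_\rho])\leq\delta_k\bigl[\mathbf{M}_p(\langle[T_0],d(\cdot,x),\rho\rangle)\bigr]^{(k+1)/k}.
\]
Plugging the competitor $[T_\rho]:=[T_0]-[T_0]\res B_\rho(x)+[S_\rho]\in\mathcal M$ into the quasi-minimality inequality, using $\fflat_p\leq\mathbf{M}_p$ and the coarea bound $\mathbf{M}_p(\langle[T_0],d(\cdot,x),\rho\rangle)\leq m'(\rho)$ from property~(i), yields
\[
(1-\epsilon)\,m(\rho)\leq (1+\epsilon)\,\delta_k\,[m'(\rho)]^{(k+1)/k}\qquad\text{for a.e.\ }\rho\in(0,r_0).
\]
Rewriting this as $(m^{1/(k+1)})'\geq c_\epsilon$ a.e.\ and integrating from $0$ (which is legitimate because $m$ is nondecreasing and $m(\rho)>0$ for every $\rho>0$, as $x\in\mathrm{supp}\,[T_0]$) gives $m(\rho)\geq c_\epsilon^{k+1}\rho^{k+1}$ on $(0,r_0)$; comparing with $m(\rho)\leq\mathbf{M}_p([T_0])$ forces $r_0\leq C_k[\mathbf{M}_p([L])]^{1/k}$ uniformly in $x\in\mathrm{supp}\,[T_0]$, which is the desired conclusion.

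The main obstacle will be to guarantee that the quasi-minimizer $[T_0]$ produced by Ekeland belongs to $\icp{k+1}{E}$, as demanded by the statement, rather than merely to $\fcp{k+1}{E}$, since the identification $\{[T]\in\fcp{k+1}{E}:\mathbf{M}_p([T])<\infty\}=\rcp{k+1}{E}$ is flagged as open in Section~\ref{sretti}. I expect to handle this by running Ekeland directly inside $\icp{k+1}{E}$, exploiting that the seed $[\bar T]$ and every competitor $[T_\rho]$ built by the slice-and-fill procedure above lies in $\icp{k+1}{E}$, together with a closedness argument in the spirit of Theorem~\ref{tbrett} identifying $\fflat_p$-limits of sequences in $\icp{k+1}{E}$ with uniformly bounded mass and boundary mass as elements of $\icp{k+1}{E}$.
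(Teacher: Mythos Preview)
Your core strategy---Ekeland on $\mathcal M$ to obtain a quasi-minimizer, then slice-and-replace using Corollary~\ref{tisop1} to force the density lower bound $\|T_0\|_p(B_\rho(x))\gtrsim\rho^{k+1}$ and hence bound the distance from ${\rm supp}\,[T_0]$ to ${\rm supp}\,[L]$---is exactly the paper's argument in Section~\ref{sekeland}. One minor discrepancy: the paper endows $\mathcal M$ with the distance induced by ${\bf M}_p$ rather than $\fflat_p$; this makes completeness of $\mathcal M$ itself immediate (the condition ${\bf M}_p<\infty$ is not $\fflat_p$-closed) and gives the quasi-minimality inequality against all $[T]\in\mathcal M$ without restricting to a sublevel set.

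The genuine gap is in your proposed resolution of the obstacle you correctly flag. You plan to keep the Ekeland quasi-minimizer inside $\icp{k+1}{E}$ via a ``closedness argument in the spirit of Theorem~\ref{tbrett}'' asserting that $\fflat_p$-limits of sequences in $\icp{k+1}{E}$ with bounded ${\bf M}_p$-mass stay in $\icp{k+1}{E}$. But that statement is essentially the identity $\{[T]\in\fcp{k+1}{E}:{\bf M}_p([T])<\infty\}=\rcp{k+1}{E}$, which Section~\ref{sretti} explicitly records as an \emph{open problem}; you cannot invoke it here. The paper bypasses the issue by a completely different device placed at the very start of the proof: it shows directly that the infimum of filling radii taken over $\icp{k+1}{E}$ equals the infimum taken over $\fcp{k+1}{E}$. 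Given a flat filling $[S]\in\fcp{k+1}{E}$ supported in the $r$-neighbourhood of $K={\rm supp}\,[L]$ and any $r'>r$, one approximates $S$ by $S_i\in\ic{k+1}{E}$, truncates at a good level $\rho\in(r,r')$ of the distance function $u$ from $K$ so that the $S_i$ are supported in $\{u<\rho\}$, and then restricts the identity $S-S_i=A+\partial B+pQ$ to a further good level $t\in(\rho,r')$ to produce the genuinely integer-rectifiable filling $[S_i+A\res\{u<t\}-\langle B,u,t\rangle]\in\icp{k+1}{E}$ with boundary $[L]$ and support in $\{u<r'\}$. After this reduction the entire Ekeland and density argument is carried out in $\fcp{k+1}{E}$, and no rectifiability of the quasi-minimizer is ever required.
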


\noindent {\sc Proof.} We claim that the infimum is unchanged if we
look for fillings in the more general class $\fcp{k+1}{E}$. Indeed,
let $[S]\in\fcp{k+1}{E}$ with $\partial [S]=[L]$ whose support is
contained in the $r$-neighbourhood of $K$, and let $u$ be the
distance function from $K$, the support of $[L]$. We consider a
sequence $(S_i)\subset\ic{k+1}{E}$ with
$\sum_i\fflat_p(S_i-S)<\infty$ and $r'>r$. We know that for
$\Leb{1}$-a.e. $\rho\in (r,r')$ we still have
$[S_i\res\{u<\rho\}]\to [S\res\{u<\rho\}]$ in $\fcp{k+1}{E}$, and
since $[S\res\{u<\rho\}]=[S]\res\{u<\rho\}=[S]$ we see that,
possibly replacing $S_i$ by $S_i\res\{u<\rho\}$, there is no loss of
generality in assuming that the supports of $S_i$ are contained in
the $\rho$-neighbourhood of $K$, for some $\rho<r'$. Now, let us fix
$i$ and write
$$
S-S_i=A+\partial B+pQ
$$
with $A\in\rc{k+1}{E}$, $B\in\rc{k+2}{E}$, $Q\in\fc{k+1}{E}$. For
$\Leb{1}$-a.e. $t\in (\rho,r')$ we can restrict both sides to
$\{u<t\}$ to obtain
$$
S-S_i=A\res\{u<t\}-\langle B,u,t\rangle+\partial
(B\res\{u<t\})+pQ\res\{u<t\}.
$$
It follows that the current $[S_i+A\res\{u<t\}-\langle
B,u,t\rangle]\in\icp{k+1}{E}$ has boundary $[L]$ and support
contained in the $r'$-neighbourhood of $K$. Since $r'>r$ is
arbitrary, this proves the claim.

So, from now on we look for $[S]\in\fcp{k+1}{E}$ with
$\partial[S]=[L]$ and we set
\[
c:=\delta_k [{\bf M}_p([L])]^{(k+1)/k}.
\]

\section{Ekeland principle}\label{sekeland}

Let us recall the Ekeland variational principle \cite{ET} (see also
the proof in \cite{ET1}, using only the countable axiom of choice):
If $(X,d)$ is a complete metric space and $f:X\to\R\cup\{+\infty\}$
is lower semicontinuous and bounded from below, then for all
$\eps>0$ there exists $y\in X$ such that $x\mapsto f(x)+\eps d(x,y)$
attains its minimum value at $x=y$.  Since ${\bf M}_p\geq\fflat_p$
and is $\fflat_p$ lower semicontinous, we know that ${\mathcal M}$
is a complete metric space, when endowed with the distance induced
by ${\bf M}_p$. Let $\eps>0$ be fixed; the lower semicontinuity of
$[T]\mapsto{\bf M}_p([T])$ ensures that we can apply the Ekeland
variational principle to find $[S]\in {\mathcal M}$ such that
$$
[T]\mapsto{\bf M}_p([T])+\eps{\bf M}_p([T]-[S]) \qquad\qquad
[T]\in{\mathcal M}
$$
is minimal at $[T]=[S]$. If $\eps\leq 1/2$, the minimality of $[S]$ gives
\begin{equation}\label{dgcl1}
{\bf M}_p([S])\leq {1+\eps\over 1-\eps}{\bf M}_p([\bar T])\leq 3c.
\end{equation}
Let us now prove the density lower bound
\begin{equation}\label{dgcl}
\|S\|_p(B_\ro(x))\geq {(3\delta_k)^{-k}\over (k+1)^{k+1}}\ro^{k+1}
\qquad\text{for all $\ro\in (0,\tau(x))$}
\end{equation}
for any $x\in {\rm supp\,}[S]\setminus K$; here $\tau(x)={\rm
dist}(x,K)>0$.
 In order to prove (\ref{dgcl}) we use a standard
comparison argument based on the isoperimetric inequalities: let
$x\in{\rm supp\,}[S]\setminus K$: for $\Leb{1}$-a.e. $\ro>0$ the
slice
$$[S_\ro]:=\langle [S],d(\cdot,x),\ro\rangle=
\partial ([S]\res \{d(\cdot,x)<\ro\})-
(\partial [S])\res\{d(\cdot,x)<\ro\}$$ belongs to $\fcp{k}{E}$ and
has no boundary, because the conditions $\rho<\tau(x)$ and
$\partial[S]=[L]$ imply
\[
(\partial [S])\res\{d(\cdot,x)<\ro\}=0.
\]
By Corollary~\ref{tisop1} we can find $[R]\in\fcp{k+1}{E}$ with
$\partial[R]=[S_\ro]$ and
\begin{equation}
\label{monge1}
{\bf M}_p([R])\leq\delta_k\bigl[{\bf M}_p([S_\ro])\bigr]^{(k+1)/k}.
\end{equation}
Comparing $[S]$ with
\[
[S']:=[S]\res \left(E\setminus B_\ro(x)\right)+[R]
\]
we find
\begin{eqnarray*}
{\bf M}_p([S])&\leq&
{\bf M}_p([S'])+\eps\fflat_p([S]\res B_\ro(x)-R)\leq{\bf M}_p([R])
+{\bf M}_p([S]\res (E\setminus B_\ro(x)))\\
&+&\eps {\bf M}_p([S]\res B_\ro(x))+\eps {\bf M}_p([R]),
\end{eqnarray*}
so that
\begin{equation}
\label{monge2}
{\bf M}_p([S]\res B_\ro(x))\leq\frac{1+\eps}{1-\eps}{\bf M}_p([R])\leq
3{\bf M}_p([R]).
\end{equation}
By \eqref{monge1} and \eqref{monge2} it follows that
\[
\|S\|_p(B_\ro(x))\leq
3\delta_k\left[\frac{d}{d\ro}\|S\|_p(B_\ro(x))\right]^{(k+1)/k}
\]
for $\Leb{1}$-a.e. $\ro>0$. Since $\|S\|_p(B_\ro(x))>0$ for any
$\ro>0$ (because $x\in {\rm supp\,}[S]$), this proves that
\[
\ro\mapsto\bigl(\|S\|_p(B_\ro(x))^{1/(k+1)}-(3\delta_k)^{-k/(k+1)}\ro/(k+1)
\]
nondecreasing, and hence nonnegative, in $(0,\tau(x))$.

To obtain that the estimate on the support of $[S]$ it suffices to
take a sequence $\ro_i\uparrow\tau(x)$ and to use the inequalities
$$
\|T\|_p(B_\ro(x))\leq {\bf M}_p([S])\leq 3c\leq 3\delta_k {\bf
M}_p([L])]^{(k+1)/k}
$$ to obtain that $\tau(x)$ can be bounded by a multiplicative
constant times ${\bf M}_p([L])]^{1/k}$.  Since~$x$ is arbitrary this
proves that the support of $[S]$ is contained in the $r$-neighbourhood
of~$K$, with $r\leq C_k\bigl[{\bf M}_p([L])\bigr]^{1/k}$. \qed

\begin{remark} [Extension to $\fcp{k}{E}$]
{\rm The same property holds, with the same proof, in the classes
$\fcp{k}{E}$, namely: for all $[L]\in\fcp{k}{E}$ with ${\bf
M}_p([L])<\infty$ and $\partial [L]=0$ the infimum of the numbers $r$
such that there exists $[T]\in\fcp{k+1}{E}$ satisfying $\partial
[T]=[L]$ whose support is contained in the $r$-neighbourhood of ${\rm
supp\,}[L]$ is not greater than~$C_k\bigl[{\bf M}_p([L])\bigr]^{1/k}.$
}
\end{remark}

\section{Nonorientable manifolds and currents $\modtwo$}

Let $(M,g)$ be a compact $n$-dimensional Riemannian manifold without
boundary and let $\tau$ be a Borel orientation of $M$, i.e. a Borel
choice of unit vectors $\tau_1,\ldots,\tau_n$ spanning the tangent
space and mutually orthogonal (the construction can be easily
achieved in local coordinates and gluing, by the minimal Borel
regularity imposed on $\tau$, is not a problem), possibly up to
$\Haus{n}$-negligible sets. Here $\Haus{n}$ is the Hausdorff
$n$-dimensional measure induced by the Riemannian distance. Of
course, when $M$ is not orientable any orientation $\tau$ is
necessarily discontinuous and it is by no means canonical. In any
case, given this orientation, we can define a current $\segop
M\segcl\in\rc{n}{M}$ as follows:
$$
\segop M\segcl(f d\pi_1\wedge\ldots\wedge d\pi_k):=\int_M f{\rm
det}\bigl(\frac{\partial\pi_i}{\partial\tau_j}\bigr) \,d\Haus{n}.
$$
While $\segop M\segcl$ is not canonical, its equivalence class
$\modtwo$ obviously is, because different orientations induce
currents $\segop M\segcl$ equivalent $\modtwo$. In connection with
mass measures, it is not difficult to check that
$$
\|\segop M\segcl\|(B)=\Haus{n}(B)\qquad\text{for all $B\subset M$
Borel}
$$
(or, it suffices to apply Lemma~9.2 and Theorem~9.5 of \cite{ak2},
valid in a much more general context). In turn, $\Haus{n}$ coincides
with the Riemannian volume measure, see for instance
\cite[3.2.46]{federer}. Passing to the equivalence class the same is
true, because $\segop M\segcl$ is already reduced $\modtwo$, hence
$\|\segop M\segcl\|_2=\|\segop M\segcl\|$ and their total mass is
${\rm Vol}(M)$.

We are now going to show that $\partial \segop M\segcl=0$ $\modtwo$,
and we prove this fact building a ``nice'' current on $M$ as the
image of the exponential map ${\rm Exp}_O$ at some base point $O\in
M$. As the referee pointed out, for the purpose of proving $\partial
\segop M\segcl=0$ $\modtwo$ simpler proofs are possible, which apply
to Lipschitz manifolds as well; on the other hand, we believe that
this global construction (which uses some properties of the cut
locus estabilished only recently) might have an independent
interest.

\begin{theorem}\label{ttanaka}
Let $(M,g)$ be a compact $n$-dimensional Riemannian manifold with no
boundary. Then $\partial \class =0$ and, in particular, $\class \in
{\bf I}_{2,n}(M)$.
\end{theorem}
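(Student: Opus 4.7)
The plan is to realize $\segop M\segcl$, up to a $\Haus{n}$-negligible set and a global sign, as the push-forward under the exponential map of an integer rectifiable current on the tangent space $T_O M$ at some base point $O\in M$, and then to exploit the $2$-to-$1$ behaviour of ${\rm Exp}_O$ on the tangent cut locus to kill the boundary $\modtwo$.

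Fix $O\in M$ and let $\Omega\subset T_O M$ denote the star-shaped open set of vectors $v$ with $t\mapsto{\rm Exp}_O(tv)$ a minimizing geodesic on $[0,1]$. By classical Riemannian geometry ${\rm Exp}_O$ is a diffeomorphism of $\Omega$ onto the connected open set $M\setminus {\rm Cut}(O)$, with nowhere singular differential. Fixing the standard orientation of $T_O M\cong\R^n$, I would consider the current
\[
R:=({\rm Exp}_O)_\sharp\segop\chi_\Omega\segcl\in\rc{n}{M}.
\]
Since ${\rm Cut}(O)$ has $\Haus{n}$-measure zero, $R$ has multiplicity $\pm 1$ throughout $M\setminus{\rm Cut}(O)$. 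Comparing with the Borel orientation $\tau$ fixed in the construction of $\segop M\segcl$, the two currents differ at worst by a global sign on the connected set $M\setminus{\rm Cut}(O)$, hence $[R]=\class$ in $\fcp{n}{M}$, since $\modtwo$ the sign is immaterial.

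It therefore suffices to show $\partial[R]=0$ in $\fcp{n-1}{M}$. The Itoh--Tanaka regularity results for the cut locus (the ``properties of the cut locus estabilished only recently'' alluded to in the statement) say that the topological boundary $\partial\Omega$, the \emph{tangent cut locus}, is a compact $(n-1)$-dimensional Lipschitz hypersurface with finite $\Haus{n-1}$-measure; hence $\partial\segop\chi_\Omega\segcl$ is the integer rectifiable current carried by $\partial\Omega$ with unit multiplicity and orientation given by the outward normal to $\Omega$, and
\[
\partial R=({\rm Exp}_O)_\sharp\partial\segop\chi_\Omega\segcl\in\rc{n-1}{M}.
\]
The question reduces to computing the multiplicity of $\partial R$ at $\Haus{n-1}$-a.e.\ point of ${\rm Cut}(O)$.

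The crucial structural fact I would invoke is that $\Haus{n-1}$-a.e.\ $q\in{\rm Cut}(O)$ is a \emph{regular} cut point: $q$ is joined to $O$ by exactly two distinct minimizing unit-speed geodesics and ${\rm Exp}_O$ is a local diffeomorphism at each of the two corresponding preimages $v_1,v_2\in\partial\Omega$; the exceptional locus (conjugate cut points together with cut points of multiplicity $\geq 3$) has Hausdorff dimension at most $n-2$, hence is $\Haus{n-1}$-negligible. At every regular cut point the two preimages contribute signed multiplicities $m_1,m_2\in\{-1,+1\}$, and $m_1+m_2\in\{-2,0,2\}$ always reduces to $0$ modulo $2$. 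Therefore any reduction $(\partial R)^2$ of $\partial R$ modulo $2$ is concentrated on an $\Haus{n-1}$-negligible set, and being integer rectifiable it must vanish; by Theorem~\ref{teosabato}, ${\bf M}_2(\partial[R])=\|(\partial R)^2\|(M)=0$, so $\partial\class=\partial[R]=0$ in $\fcp{n-1}{M}$. Combined with $\class\in\rcp{n}{M}$, this gives $\class\in\icp{n}{M}={\bf I}_{2,n}(M)$. The main obstacle is precisely the identification of the $2$-to-$1$ behaviour at $\Haus{n-1}$-a.e.\ point of $\partial\Omega$, which rests on the Itoh--Tanaka Lipschitz regularity of the cut locus and on the lower Hausdorff dimension of the conjugate/higher-multiplicity part of the tangent cut locus.
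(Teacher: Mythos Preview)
Your proposal is correct and follows essentially the same route as the paper: push forward $\segop\chi_\Omega\segcl$ under ${\rm Exp}_O$, identify the result with $\segop M\segcl$ for a suitable orientation, and then show that the boundary has even multiplicity by exploiting the $2$-to-$1$ behaviour of ${\rm Exp}_O$ on $\Haus{n-1}$-almost all of the tangent cut locus. Where you invoke Hausdorff-dimension bounds for the exceptional set, the paper supplies a slightly more explicit mechanism via the semiconcavity of the distance function $u$ from $O$: the set $\{x:\dim(\partial^+u(x))\geq 2\}$ has $\sigma$-finite $\Haus{n-2}$-measure (giving at most two minimizing geodesics $\Haus{n-1}$-a.e.\ on $C$), while the set of differentiability points of $u$ inside $C$ is $\Haus{n-1}$-negligible (giving at least two).

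One minor slip worth flagging: your claim that $R$ and $\segop M\segcl$ differ by a \emph{global} sign on the connected set $M\setminus{\rm Cut}(O)$ is unjustified when $M$ is non-orientable, since the Borel orientation $\tau$ defining $\segop M\segcl$ is then necessarily discontinuous and connectedness alone cannot force a merely measurable sign function to be constant. Your conclusion $[R]=\class$ survives anyway, for exactly the reason you state immediately afterward (both multiplicities lie in $\{-1,+1\}$ $\Haus{n}$-a.e., hence agree $\modtwo$); the paper sidesteps the issue by \emph{choosing} the orientation of $M$ to be the one pushed forward from $T_OM$ via ${\rm Exp}_O$, so that equality holds on the nose.
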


\noindent {\sc Proof.} We fix a base point $O\in M$ and consider the
distance function $u$ from $O$. We consider the tangent cut locus
$TC$ at $O$, namely $v\in T_OM$ belongs to $TC$ if and only if
$\exp_O(tv)$ is the unique minimizing geodesic in $[0,\tau]$ for all
$\tau<1$, and it is nonminimizing in $[0,\tau]$
for all $\tau>1$. It turns out that $TC$ is locally a Lipschitz
graph \cite{tanaka,nirenberg}, and that the boundary of the
star-shaped region
$$
\Omega:=\left\{tv:\ v\in TC,\,\,t\in [0,1]\right\}
$$
is contained in $TC$. Of course the exponential map ${\rm Exp}_O$
maps $TC$ into the cut locus, that we shall denote by $C$.

Next, we consider some additional regularity properties of $u$,
besides $1$-Lipschitz continuity: this function is locally
semiconcave out of $O$, namely in local coordinates its
second derivatives are locally bounded from above in $M\setminus\{O\}$.
This implies, by standard results about semiconcave functions and viscosity solutions to the
Hamilton-Jacobi equation $g_x(\nabla u,\nabla u)=1$ the following
facts (for (i), (ii), (iii) see for instance \cite{mantegazza}; for
the more delicate property (iv) see \cite[Theorem~4.12]{mennucci} or
the appendix of \cite{figalli}):

\begin{itemize}
\item[(i)] for all $x\neq O$ the set of supergradients
$$
\partial^+u(x):=\left\{v\in T_xM:\ u({\rm exp}_x(w))\leq
u(x)+g_x(v,w)+o(|w|)\right\}
$$
is convex and not empty, and $u$ is differentiable at $x$ if and
only if $\partial^+u(x)$ is a singleton;
\item[(ii)] for all $x\neq O$
 the closed convex hull of $\partial^+u(x)\cap\{v\in T_xM:\ g_x(v,v)=1\}$
coincides with $\partial^+u(x)$ and the former set is in 1-1
correspondence with final speeds of minimizing unit speed geodesics
joining $O$ to $x$;
\item[(iii)] for $j$ integer the set $\left\{x\in M:\ {\rm
dim}\bigl(\partial^+u(x)\bigr)\geq j\right\}$ has $\sigma$-finite
$\Haus{n-j}$-measure;
\item[(iv)] the set of points $x\in C$ where $u$ is differentiable
is $\Haus{n-1}$-negligible.
\end{itemize}

Now, we fix an orientation of $T_OM$ and we consider the canonical
(Euclidean) $n$-current $\segop\Omega\segcl\in\rc{n}{T_OM}$, with
multiplicity 1 on $\Omega$ and $0$ on $T_OM\setminus\Omega$ induced
by this orientation; since
\[
\Haus{n-1}(\partial\Omega)\leq\Haus{n-1}(TC)<\infty
\]
we know that $\segop\Omega\segcl\in\ic{n}{T_OM}$ and its boundary is
supported on $TC$. Then, we consider its image $T=({\rm
exp}_O)_\sharp\segop\Omega\segcl\in\ic{n}{M}$ via the exponential
map. We are going to prove that:
\begin{itemize}
\item[(a)] $T=\segop M\segcl$ for some orientation of $M$;
\item[(b)] $\partial T=2R$ for some $R\in\ic{n-1}{M}$.
\end{itemize}
These two facts imply the stated properties of $\segop M\segcl$. In
connection with (a), notice first that ${\rm exp}_O(\Omega)=M$,
because for each point $x\in M$ there is at least one minimizing
geodesic to $O$, and it is unique before reaching $x$. Moreover,
Rademacher's theorem implies that $\Haus{n}$-a.e. point $x\in M$ is
a differentiability point of $u$, so that $\partial u^+(x)=\{\nabla
u(x)\}$ is a singleton and there is a unique minimizing constant
speed geodesic between $O$ and $x$ (since its final speed is
uniquely determined, ODE uniqueness applies); if $v$ is the initial
speed of this geodesic, it turns out that $x={\rm exp}_O(d(O,x)v)$
and $t d(O,x) v\in\Omega$ for all $t<1$, hence $d(O,x)v\in\Omega$.
This proves that ${\rm exp}_O$ has a unique inverse $\Haus{n}$-a.e.;
these facts imply that $T=\segop M\segcl$ provided we choose as
orientation of $M$ the one induced by $T_OM$ via the exponential map
${\rm exp}_O$.

In connection with (b), we know that $\partial T=({\rm
exp}_O)_\sharp(\partial \segop\Omega\segcl)$ and that
$\partial\segop\Omega\segcl$ is a current with unit multiplicity
$\Haus{n-1}$-a.e. on $\partial\Omega$, because $TC$ is locally a
Lipschitz graph. We claim that for $\Haus{n-1}$-a.e. $x\in C$ the
pre-image ${\rm exp}_O^{-1}(x)$ contains exactly two points. Since
the multiplicity of $\partial T$ at $x$ can be obtained adding the
properly multiplicities of $\partial \segop\Omega\segcl$ at ${\rm
exp}_O^{-1}(x)$, this proves that $\partial T$ has an even
multiplicity. To prove the claim, we know by (iv) that for
$\Haus{n-1}$-a.e. $x\in C$ the number of minimizing geodesics is
strictly greater than 1; on the other hand, (iii) with $j=2$ gives
that for $\Haus{n-1}$-a.e. $x\in C$ the dimension of
$\partial^+u(x)$ is at most 1, hence the extreme points are at most
two: therefore there exist precisely two minimizing geodesics from
$O$ to $x$ at $\Haus{n-1}$-a.e. $x\in C$.
 \qed

\noindent {\bf Proof of Theorem~\ref{tmain}.} It suffices to apply
Theorem~\ref{tkatz1} with $k=n$. To this aim, we consider the
canonical current $\class$ associated to $M$. By
Theorem~\ref{ttanaka} this current belongs to ${\bf I}_{2,n}(M)$ and
it is a cycle $\modtwo$. Then, given an isometric embedding $i$ of
$M$ into a (separable) Banach space $F$, we consider the closed
convex hull $E$ of $i(M)$ (which is a compact set, by the
compactness of $i(M)$), and apply Theorem~\ref{tkatz1} to the cycle
$[L]=i_\sharp\class\in{\bf I}_{2,n}(E)$, whose ${\bf
M}_2$ mass is (by the isometric invariance of the
${\bf M}_2$-mass of rectifiable currents) equal to
${\bf M}_2(\segop M\segcl)={\rm Vol}(M)$.

\section{Appendix}

In this appendix we recall the basic definitions of the metric
theory developed in \cite{ak2}.

\begin{definition} Let $k\geq 1$ be an integer.
We denote by ${\mathcal D}^k(E)$ the set of all $(k+1)$-ples
$\omega=(f,\pi_1,\ldots,\pi_k)$ of Lipschitz real valued functions
in $E$ with the first function $f$ in ${\rm Lip}_b(E)$. In the case
$k=0$ we set ${\mathcal D}^0(E)={\rm Lip}_b(E)$.
\end{definition}

\begin{definition}[Metric functionals]
We call $k$-dimensional metric current any function $T:{\mathcal
D}^k(E)\to\rn{}$ satisfying the following three axioms:
\begin{itemize}
\item[(a)] $T$ is multilinear;
\item[(b)] $T(f,\pi_1^n,\ldots,\pi_k^n)\to T(f,\pi_1,\ldots,\pi_k)$
whenever $\pi_i^n\to\pi_i$ pointwise and $\sup_n{\rm Lip}(\pi_i^n)$
is finite, for $1\leq i\leq k$;
\item[(c)] $T(f,\pi_1,\ldots,\pi_k)=0$ if, for some $i\in\{1,\ldots,k\}$, $\pi_i$ is
constant in a neighbourhood of the support of $f$.
\end{itemize}
We denote by $MF_k(E)$ the vector space of $k$-dimensional metric
currents.
\end{definition}

A consequence of these axioms is that $T$ is alternating in
$(\pi_1,\ldots,\pi_k)$, so the differential forms notation $f
d\pi_1\wedge\ldots\wedge d\pi_k$ can be used. We can now define an
``exterior differential''
$$
d\omega=d (f d\pi_1\wedge\ldots\wedge d\pi_k):=df\wedge
d\pi_1\wedge\ldots\wedge\pi_k
$$
mapping ${\mathcal D}^k(E)$ into ${\mathcal D}^{k+1}(E)$ and, for
$\varphi\in{\rm Lip}(E,F)$, a pull back operator
$$
\varphi^\sharp\omega=\varphi^\sharp(f d\pi_1\wedge\ldots\wedge
d\pi_k= f\circ\varphi d\pi_1\circ\varphi\wedge\ldots\wedge
d\pi_k\circ\varphi
$$
mapping ${\mathcal D}^k(F)$ on ${\mathcal D}^k(E)$. These operations induce
in a natural way a boundary operator and a push forward map for
metric functionals.

\begin{definition}[Boundary] Let $k\geq 1$ be an integer and let
$T\in MF_k(E)$. The boundary of $T$, denoted by $\partial T$, is the
$(k-1)$-dimensional metric current in $E$ defined by $\partial
T(\omega)=T(d\omega)$ for any $\omega\in {\mathcal D}^{k-1}(E)$.
\end{definition}

\begin{definition}[Push-forward] Let $\varphi:E\to F$ be a
Lipschitz map and let $T\in MF_k(E)$. Then, we can define a
$k$-dimensional metric current in $F$, denoted by $\varphi_\sharp
T$, setting $\varphi_\sharp T(\omega)=T(\varphi^\sharp \omega)$ for
any $\omega\in {\mathcal D}^k(F)$.
\end{definition}

We notice that, by construction, $\varphi_\sharp$ commutes with the
boundary operator, i.e.
\begin{equation}\label{commuta1}
\varphi_\sharp(\partial T)=\partial(\varphi_\sharp T).
\end{equation}

\begin{definition}[Restriction]
Let $T\in MF_k(E)$ and let $\omega=g d\tau_1\wedge\ldots\wedge
d\tau_m\in{\mathcal D}^m(E)$, with $m\leq k$ ($\omega=g$ if $m=0$). We
define a $(k-m)$-dimensional metric current in $E$, denoted by
$T\res\omega$, setting
$$
T\res\omega (f d\pi_1\wedge\ldots\wedge d\pi_{k-m}):= T(fg
d\tau_1\wedge\ldots\wedge d\tau_m\wedge d\pi_1\wedge\ldots\wedge
d\pi_{k-m}).
$$
\end{definition}

\begin{definition}[Currents with finite mass]\label{dmass}
Let $T\in MF_k(E)$; we say that $T$ has finite mass if there exists
a finite Borel measure $\mu$ in $E$ satisfying
\begin{equation}\label{dmass1}
|T(f d\pi_1\wedge\ldots\wedge d\pi_k)|\leq \prod_{i=1}^k{\rm
Lip}(\pi_i)\int_E |f|\,d\mu
\end{equation}
for any $fd\pi_1\wedge\ldots\wedge d\pi_k\in {\mathcal D}^k(E)$, with
the convention $\prod_i {\rm Lip}(\pi_i)=1$ if $k=0$.
\end{definition}

It can be shown that there is a minimal measure $\mu$ satisfying
(\ref{dmass1}), which will be denoted by $\Vert T\Vert$ (indeed one
checks, using the subadditivity of $T$ with respect to the first
variable, that if $\{\mu_i\}_{i\in I}\subset{\mathcal M}(E)$ satisfy
(\ref{stimamu}) also their infimum satisfies the same condition). We
call mass of $T$ the total mass of $\|T\|$, namely ${\bf
M}(T)=\|T\|(E)$.

By the density of ${\rm Lip}_b(E)$ in $L^1(E,\Vert T\Vert)$, which
contains the class of bounded Borel functions, any $T\in MF_k(E)$ with
finite mass can be uniquely extended to forms~$f\,d\pi$ with~$f$
bounded Borel, in such a way that
\begin{equation} \label{stimamu}
|T(f d\pi_1\wedge\ldots\wedge d\pi_k)|\leq\prod_{i=1}^k{\rm
Lip}(\pi_i) \int_E |f|\,d\Vert T\Vert
\end{equation}
for any $f$ bounded Borel, $\pi_1,\ldots,\pi_k\in{\rm Lip}(E)$.
Since this extension is unique we do not introduce a distinguished
notation for it.

Functionals with finite mass are well behaved under the push-forward
map: in fact, if~$T\in MF_k(E)$ the functional $\varphi_\sharp T$ has
finite mass, satisfying
\begin{equation}\label{pusho}
\Vert\varphi_{\sharp}T\Vert\leq [{\rm Lip}(\varphi)]^k
\varphi_{\sharp}\Vert T\Vert\enspace .
\end{equation}
If either $\varphi$ is an isometry or $k=0$ it is easy to check,
using (\ref{massup}) below, that equality holds in (\ref{pusho}). It
is also easy to check that the identity
$$
\varphi_\sharp T(f d\pi_1\wedge\ldots\wedge d\pi_k)= T(f\circ\varphi
d\pi_1\circ\varphi\wedge\ldots\wedge d\pi_k\circ\varphi)
$$
remains true if $f$ is bounded Borel and $\pi_i\in{\rm Lip}(E)$.

Functionals with finite mass are also well behaved with respect to
the restriction operator: in fact, the definition of mass easily
implies
\begin{equation}\label{massrest}
\Vert T\res \omega\Vert\leq \sup |g|\prod_{i=1}^m{\rm Lip}(\tau_i)
\Vert T\Vert\qquad\hbox{\rm with}\qquad \omega=g
d\tau_1\wedge\ldots\wedge d\tau_m.
\end{equation}
For metric functionals with finite mass, the restriction operator
$T\res\omega$ can be defined even though
$\omega=(g,\tau_1,\ldots,\tau_m)$ with $g$ bounded Borel, and still
(\ref{massrest}) holds; the restriction will be denoted by $T\res A$
in the special case $m=0$ and $g=\chi_A$.

Finally, we will use the following approximation results.

\begin{proposition}\label{plindenstrauss}
Let $E$ be a closed convex set of a Banach space. Then $\ic{k}{E}$
is dense in $\rc{k}{E}$ in mass norm. As a consequence $\ic{k}{E}$
is dense in $\fc{k}{E}$ in flat norm. The same holds in metric
spaces $F$ that are Lipschitz retracts of $E$.
\end{proposition}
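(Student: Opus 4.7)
The plan is to approximate a generic $T\in\rc{k}{E}$ in mass by elements of $\ic{k}{E}$; once this is done, the flat-norm density and the Lipschitz-retract extension follow by soft arguments. The starting point is the structural representation of integer rectifiable currents recalled in Section~\ref{sretti}: any such $T$ may be written as $T=\sum_i(f_i)_\sharp\segop\theta_i\chi_{K_i}\segcl$, with $K_i\subset\R^k$ compact, $f_i:K_i\to E$ bi-Lipschitz onto disjoint images, $\theta_i\in L^1(\R^k;\Z)$ supported in $K_i$, and the series converging in the mass norm. Truncating the sum reduces the problem to approximating a single bi-Lipschitz piece $T=f_\sharp\segop\theta\chi_K\segcl$.

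The first substantive step is to extend $f:K\to E$ to a Lipschitz map $\tilde f:\R^k\to E$ which still takes values in the convex set $E$. I would do this via a Whitney-type partition-of-unity construction: cover $\R^k\setminus K$ by balls $B(x_j,r_j)$ with $r_j$ comparable to $\dist(x_j,K)$, pick $p_j\in K$ nearest to $x_j$, and define $\tilde f=f$ on $K$ and $\tilde f(x)=\sum_j\varphi_j(x)f(p_j)$ elsewhere, with $\{\varphi_j\}$ a subordinate Lipschitz partition of unity. Convexity of $E$ forces $\tilde f(x)\in E$, and the classical Whitney estimates give ${\rm Lip}(\tilde f)\leq C(k){\rm Lip}(f)$. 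Next, I approximate $\theta\chi_K$ in $L^1(\R^k;\Z)$ by integer step functions $\theta_n=\sum_{Q\in\mathcal G_n}m_{Q,n}\chi_Q$ built on a fine dyadic grid, whose supports shrink to a neighbourhood of $K$. Each $\segop\theta_n\segcl$ is integer rectifiable, and its boundary has explicit finite mass (the sum over internal cube faces of the jumps of $\theta_n$), so $\segop\theta_n\segcl\in\ic{k}{\R^k}$; the push-forward $\tilde f_\sharp\segop\theta_n\segcl$ therefore lies in $\ic{k}{E}$. The inequality ${\bf M}(\varphi_\sharp S)\leq[{\rm Lip}(\varphi)]^k{\bf M}(S)$, together with $\theta_n\to\theta\chi_K$ in $L^1$, yields $\tilde f_\sharp\segop\theta_n\segcl\to f_\sharp\segop\theta\chi_K\segcl=T$ in mass.

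The flat-norm density follows by writing $T'\in\fc{k}{E}$ as $T'=R+\partial S$ with $R\in\rc{k}{E}$, $S\in\rc{k+1}{E}$, approximating $R$ in mass by $R_n\in\ic{k}{E}$ and $S$ in mass by $S_n\in\ic{k+1}{E}$, and using $\fflat(T'-R_n-\partial S_n)\leq{\bf M}(R-R_n)+{\bf M}(S-S_n)\to0$. For a Lipschitz retract $r:E\to F$ with $r|_F={\rm id}$, any $T\in\rc{k}{F}\subset\rc{k}{E}$ satisfies $r_\sharp T=T$, so approximating in $E$ by $T_n\in\ic{k}{E}$ and applying $r_\sharp$ gives $r_\sharp T_n\in\ic{k}{F}$ with $r_\sharp T_n\to T$ in mass, and an analogous argument handles the flat case. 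The most delicate point in the whole scheme is the Lipschitz extension of $f$ into $E$: Kirszbraun is unavailable for general Banach targets, so convexity of $E$ must be exploited directly via the partition-of-unity (or equivalently simplicial interpolation) construction outlined above; everything else reduces to standard push-forward estimates and the structural representation from Section~\ref{sretti}.
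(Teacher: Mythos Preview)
Your argument is correct and follows essentially the same route as the paper: reduce to a single bi-Lipschitz piece, extend the map into the convex set $E$ by a Whitney partition-of-unity construction (this is precisely the content of the reference \cite{jolisc} that the paper invokes), approximate the integer density in $L^1(\R^k;\Z)$ by functions with finite-mass boundary (the paper says $BV(\R^k;\Z)$, your dyadic step functions are a special case), and push forward. Your treatment of the flat-norm consequence and the Lipschitz-retract case also matches the paper's.
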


\noindent {\sc Proof.} We argue as in \cite[Theorem~4.5]{ak2},
reducing ourselves to the approximation of currents $T\in\rc{k}{E}$
of the form $f_\sharp \segop\theta\segcl$ with $\theta\in
L^1(\R^k,\Z)$, $B\subset\R^k$ Borel, $f:B\to E$ Lipschitz and
$\theta=0$ $\Leb{k}$-a.e. out of $B$. Since $E$ is closed and
convex, the construction of \cite{jolisc} provides a Lipschitz
extension of $f$ to the whole of $\R^k$, still with values in $E$.
For $\eps>0$ given, we can choose $\theta'\in BV(\R^k;\Z)$ such that
$\int_{\R^k}|\theta-\theta'|\,dx<\eps $ to obtain that the current
$\tilde{T}:=f_{\sharp}\segop \theta'\segcl\in\ic{k}{E}$ satisfies
${\bf M}(T-\tilde{T})<\eps\bigl[{\rm Lip}(f)\bigr]^k$.

If $T\in\rc{k}{F}$ and $i:E\to F$ is a Lipschitz retraction, then we
can find a sequence $(T_n)\subset\ic{k}{E}$ converging in mass to
$T$. Then, the sequence $(i_\sharp T_n)\subset\ic{k}{F}$ provides
the desired approximation. \qed

\begin{proposition}[Characterization of mass]\label{pmasso}
Let $T\in MF_k(E)$ with finite mass. Then $\Vert T\Vert(E)$ is
representable by
\begin{equation}\label{massup}
\sup\left\{ \sum_{i=1}^N \|T\res d\pi^i\|(A_i)\right\},
\end{equation}
where the supremum runs among all finite disjoint families of open
sets $A_1,\ldots,A_N$ and all $N$-ples of $1$-Lipschitz maps
$\pi^i$.
\end{proposition}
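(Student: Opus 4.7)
The plan is to prove the two inequalities in~\eqref{massup} separately. The easy direction is immediate from~\eqref{massrest}: for any admissible configuration, namely disjoint open sets $A_1,\ldots,A_N\subset E$ together with $k$-tuples $\pi^i=(\pi^i_1,\ldots,\pi^i_k)$ of $1$-Lipschitz functions, applying~\eqref{massrest} with $g\equiv 1$, $m=k$ and $\tau_j=\pi^i_j$ gives $\|T\res d\pi^i\|(A_i)\leq\|T\|(A_i)$, so
\[
\sum_{i=1}^N \|T\res d\pi^i\|(A_i)\leq \|T\|\!\left(\bigsqcup_{i=1}^N A_i\right)\leq\|T\|(E);
\]
taking the sup yields one half of~\eqref{massup}.

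For the reverse inequality, I would construct a Borel measure from the supremum itself and then appeal to the minimality of~$\|T\|$ built into Definition~\ref{dmass}. Define, for each open $U\subset E$,
\[
\nu(U):=\sup\left\{\sum_{i=1}^N \|T\res d\pi^i\|(A_i)\right\},
\]
the sup being restricted to admissible families with $A_i\subset U$. Countable additivity on disjoint opens follows from the following refinement trick: given an admissible family $(A_i,\pi^i)$ for $U=\bigsqcup_n U_n$, replace each $A_i$ by the collection $\{A_i\cap U_n\}_n$ while keeping the same~$\pi^i$; by $\sigma$-additivity of the Borel measure $\|T\res d\pi^i\|$ the total sum is preserved, while the refined family splits into admissible families for the individual $U_n$, giving $\nu(U)\leq\sum_n\nu(U_n)$, with the reverse inequality immediate by concatenation. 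The standard outer-regular extension then produces a finite Borel measure, still denoted~$\nu$, and the upper bound above gives $\nu\leq\|T\|$.

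It remains to verify $\nu$ fulfils the defining inequality~\eqref{dmass1}, which by minimality forces $\|T\|\leq\nu$. For a Lipschitz $k$-tuple $\pi=(\pi_1,\ldots,\pi_k)$ set $L_j:=\mathrm{Lip}(\pi_j)$ and, when $L_j>0$, $\tilde\pi_j:=\pi_j/L_j$ (the case $L_j=0$ for some $j$ is trivial). The singleton family $(U,\tilde\pi)$ is admissible, so $\|T\res d\tilde\pi\|(U)\leq\nu(U)$ for open $U$, which extends to all Borel sets by regularity of $\|T\res d\tilde\pi\|$. Multilinearity in the differentials gives $T\res d\pi=\bigl(\prod_j L_j\bigr)\,T\res d\tilde\pi$, hence $\|T\res d\pi\|\leq\bigl(\prod_j L_j\bigr)\nu$, and for bounded Borel $f$
\[
|T(f\,d\pi_1\wedge\ldots\wedge d\pi_k)|\leq \int_E|f|\,d\|T\res d\pi\| \leq \prod_{j=1}^k L_j\int_E|f|\,d\nu.
\]
This is exactly~\eqref{dmass1} with~$\nu$ in place of~$\mu$; minimality of~$\|T\|$ gives $\|T\|\leq\nu$, and combining with $\nu\leq\|T\|$ yields $\nu=\|T\|$, so in particular $\|T\|(E)=\nu(E)$.

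The main obstacle I anticipate is the measure-theoretic extension of~$\nu$ from opens to all Borel sets: because each summand in the supremum carries its own~$\pi^i$, countable additivity is not immediate from a single underlying measure but has to be extracted from the $\sigma$-additivity of each fixed Borel measure $\|T\res d\pi^i\|$ via the intersection-refinement trick outlined above. Once~$\nu$ is in hand as a genuine Borel measure, the remaining rescaling step and the appeal to the minimality of~$\|T\|$ are essentially formal.
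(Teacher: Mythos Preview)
You have misread the statement: in the paper each $\pi^i$ is a \emph{single} real-valued $1$-Lipschitz function, so $T\res d\pi^i$ is a $(k-1)$-current, not a $0$-current. This is visible from the way the proposition is used in Theorem~\ref{teosabato} (the induction step there needs precisely $(k-1)$-currents), and from the paper's own proof, whose last line is the sandwich
\[
\|T\res dq_1\wedge\ldots\wedge dq_k\|\leq\|T\res dq_1\|\leq\|T\|.
\]
What you actually establish is the $k$-tuple version, which is essentially \cite[Proposition~2.7]{ak2} restricted to open sets. The paper's argument is a short reduction: quote that proposition (supremum over Borel $A_i$ and $k$-tuples $\pi^i_1,\ldots,\pi^i_k$), pass from Borel to open by inner regularity of $\|T\res d\pi^i_1\wedge\ldots\wedge d\pi^i_k\|$, and then use the sandwich above to replace the $k$-tuple by the single function $\pi^i:=\pi^i_1$. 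Your route, by contrast, reproves \cite[Proposition~2.7]{ak2} from scratch via minimality of $\|T\|$; to reach the stated proposition you still need that final sandwich inequality, which is immediate from \eqref{massrest} but is missing from your write-up.

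On your argument itself: the strategy (build $\nu$ on opens, check it dominates every $\|T\res d\tilde\pi\|$, and invoke the minimality of $\|T\|$ recorded after Definition~\ref{dmass}) is sound and rather elegant. The soft spot is exactly the one you flag: the ``standard outer-regular extension'' of $\nu$ to a Borel measure. Countable additivity on disjoint opens, which you verify, is not by itself enough to run Carath\'eodory; you also need countable subadditivity on arbitrary open covers. This does hold here (split each $A_i$ as $A_i=\bigcup_n(A_i\cap U_n)$ and use that the $(A_i\cap U_n)_i$ remain disjoint for fixed $n$, together with subadditivity of each measure $\|T\res d\pi^i\|$), but it should be stated explicitly. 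Once $\nu$ is a genuine finite Borel measure the minimality step is, as you say, formal.
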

\noindent {\sc Proof.} In \cite[Proposition~2.7]{ak2} it is proved
that
$$
\|T\|(E)= \sup\left\{ \sum_{i=1}^N \|T\res
d\pi^i_1\wedge\ldots\wedge d\pi^i_k\|(A_i)\right\},
$$
where the supremum runs among all finite disjoint families $(A_i)$
of Borel sets and $1$-Lipschitz maps $\pi^i_j$, $1\leq i\leq N$ and
$1\leq j\leq k$. Approximating Borel sets from inside with compact
sets, and then compacts sets from outside with open sets, one can
see that the supremum is the same if $(A_i)$ runs among all finite
disjoint families of open sets. By the inequalities
$$
\|T\res dq_1\wedge\ldots\wedge dq_k\|\leq\|T\res dq_1\|\leq\|T\|
$$
with $q_j$ 1-Lipschitz we obtain \eqref{massup}. \qed

\section*{Acknowledgments}

We are grateful to Stefan Wenger for many comments that helped
improve an earlier version of the manuscript, and the referee for
his detailed report.


\begin{thebibliography}{99}

\bibitem{adams} {\sc T.Adams}:
{\em Flat chains in Banach spaces.} J. Geom. Anal.,
{\bf 18} (2008), 1--28.

\bibitem{afp} {\sc L.Ambrosio, N.Fusco \& D.Pallara}: {\em Functions
of bounded variation and free discontinuity problems.} Oxford
Mathematical Monographs, 2000.

\bibitem{ak2} {\sc L.Ambrosio \& B.Kirchheim}: {\em Currents in metric
spaces.} Acta Math., {\bf 185} (2000), 1--80.

\bibitem{ambkatzwen} {\sc L.Ambrosio \& S.Wenger}: Rectifiability of
flat chains in Banach spaces with coefficients in~$\Z_p$.
Mathematische Zeitschrift (to appear).  See arXiv:0905.3372.

\bibitem{e7} Bangert, V; Katz, M.; Shnider, S.; Weinberger, S.: $E_7$,
Wirtinger inequalities, Cayley $4$-form, and homotopy.  {\em Duke
Math. J.} \textbf{146} ('09), no.~1, 35-70.  See arXiv:math.DG/0608006

\bibitem{Be5} {\sc M.Berger}: {\em Systoles et applications selon
Gromov.} S\'eminaire N.~Bourbaki, expos\'{e} 771, Ast\'{e}risque,
{\bf 216} (1993), 279--310.

\bibitem{Be6} {\sc M.Berger:} {\em A panoramic view of Riemannian
geometry.}  Springer-Verlag, Berlin, 2003.

\bibitem{Be08} {\sc M.Berger:} {\em What is... a Systole?} Notices
Amer. Math. Soc., \textbf{55} (2008), no.~3, 374-376.

\bibitem{Bru} {\sc M.Brunnbauer}: {\em Homological invariance for asymptotic
invariants and systolic inequalities.} Geometric and Functional
Analysis, \textbf{18} (2008), no.~4, 1087-1117.  See
arXiv:math.GT/0702789

\bibitem{Bru2} {\sc M. Brunnbauer}: {\em Filling inequalities do not depend on
topology.} J. Reine Angew. Math., \textbf{624} (2008), 217--231. See
arXiv:0706.2790

\bibitem{Bru3} {\sc M. Brunnbauer}: {\em On manifolds satisfying stable systolic
inequalities.} Math. Annalen, \textbf{342} (2008), no.~4, 951--968.
See arXiv:0708.2589

\bibitem{CK} {\sc C.Croke \& M.Katz}: {\em Universal volume bounds in
Riemannian mani\-folds.} Surveys in Differential Geometry\/
\textbf{8}, Lectures on Geometry and Topology held in honor of
Calabi, Lawson, Siu, and Uhlenbeck at Harvard University, May 3- 5,
2002, edited by S.T. Yau (Somerville, MA: International Press,
2003.) pp. 109 - 137.  See arXiv:math.DG/0302248

\bibitem{dg1} {\sc E.De Giorgi:} {\em Problema di Plateau generale e
funzionali geodetici.} Atti Sem. Mat. Fis. Univ. Modena, {\bf 43}
(1995), 285--292.

\bibitem{delellis} {\sc C.De Lellis:} {\em Some fine properties of
currents and applications to distributional Jacobians.} Proc. Roy.
Soc. Edinburgh, {\bf 132} (2002), 815--842.

\bibitem{depauw}  {\sc Size minimization
and approximating problems.} Calc. Var. PDE, {\bf 17} (2003),
405--442.

\bibitem{depauw2} {\sc T.De Pauw, R.Hardt:} {\em Rectifiable and flat $G$-chains in a
metric space.} Preprint, 2009.

\bibitem{ET} {\sc I.Ekeland:} {\em On the variational principle.}
Journal of Mathematical Analysis and Applications, {\bf 47} (1974),
324--353.

\bibitem{ET1} {\sc I.Ekeland:} {\em Nonconvex minimization problems.}
Bull. Amer. Math. Soc., {\bf 1} (1979), 443--474.

\bibitem{EL} {\sc C.Elmir, J.Lafontaine}: {\em Sur la g\'eom\'etrie
systolique des vari\'et\'es de Bieberbach.} Geometriae Dedicata,
\textbf{136} (2008), no.~1, 95--110.  See arXiv:0804.1419

\bibitem{federer} {\sc H.Federer}: {\em Geometric Measure Theory.}
Springer Verlag, 1969.

\bibitem{fedfle} {\sc H.Federer, W.H.Fleming:} {\em Normal and
integral current.} Ann. of Math., {\bf 72} (1960), 458--520.

\bibitem{figalli} {\sc A.Figalli, L.Rifford \& C.Villani}: {\em On
the stability of Ma-Trudinger-Wang curvature conditions.}
Forthcoming.

\bibitem{Fl} {\sc W.H.Fleming}:
{\em Flat chains over a finite coefficient group.} Trans. Amer.
Math. Soc., {\bf 121} (1966), 160--186.


\bibitem{Gr1} {\sc M.Gromov}: {\em Filling Riemannian manifolds.}
J. Diff. Geom., {\bf 18} (1983), 1--147.


\bibitem{Gu} {\sc L.Guth}: {\em Volumes of balls in large Riemannian
manifolds.}  Annals of Mathematics, to appear.  See
arXiv:math.DG/0610212


\bibitem{Gu09} {\sc L.Guth}: {\em Systolic inequalities and minimal
hypersurfaces.}  Geometric and Functional Analysis, \textbf{19}
(2010), no.~6, 1688--1692.  See arXiv:0903.5299


\bibitem{HKK} {\sc C.Horowitz, K.U.Katz, M.Katz}: {\em Loewner's torus
inequality with isosystolic defect.}  Journal of Geometric Analysis,
{\bf 19} (2009), no.~4, 796-808.  See arXiv:0803.0690

\bibitem{tanaka} {\sc J.Itoh \& M.Tanaka:} {\em The Lipschitz
continuity of the distance function to the cut locus.} Trans. Am.
Math. Soc., {\bf 353} (2001), 21--40.

\bibitem{jolisc} {\sc W.B.Johnson, J.Lindenstrauss \& G.Schechtman}:
{\em Extension of Lipschitz maps into Banach spaces.} Israeli J.
Math., {\bf 54} (1986), 129--138.

\bibitem{KK} {\sc K.U.Katz \& M.Katz:} {\em Hyperellipticity and Klein
bottle companionship in systolic geometry.}  See arXiv:0811.1717

\bibitem{KK2} {\sc K.U.Katz \& M.Katz:} {\em Bi-Lipschitz
approximation by finite-dimensional imbeddings.}  Geometriae Dedicata,
to appear.  See arXiv:0902.3126


\bibitem{KW} {\sc K.U.Katz, M.Katz, S.Sabourau, S.Shnider,
Sh.Weinberger}: Relative systoles of relative-essential $2$-complexes.
See arXiv:0911.4265.


\bibitem{Ka1} {\sc M.Katz}: {\em The filling radius of two-point
homogeneous spaces.}  J. Diff. Geom., \textbf{18} (1983), 505--511.

\bibitem{SGT} {\sc M.Katz}: {\em Systolic geometry and topology.}
With an appendix by Jake P. Solomon. Mathematical Surveys and
Monographs, \textbf{137}.  American Mathematical Society, Providence,
RI, 2007.

\bibitem{Ka4} {\sc M.Katz:} {\em Systolic inequalities and Massey
products in simply-con\-nected manifolds.} Israel J. Math.,
\textbf{164} (2008), no.~1, 381--395.  See arXiv:math.DG/0604012

\bibitem{KR2} {\sc M.Katz, Y.Rudyak}: {\em Bounding volume by systoles
of 3-manifolds.} Journal of London Math. Society, \textbf{78} (2008),
no~2, 407-417.  See arXiv:math.DG/0504008

\bibitem{kir1} {\sc B.Kirchheim}: {\em Rectifiable metric spaces: local
structure and regularity of the Hausdorff measure.} Proc. AMS, {\bf
121} (1994), 113--123.

\bibitem{lang} {\sc U.Lang}: {\em Local currents in metric spaces.}
Preprint, 2008.

\bibitem{nirenberg} {\sc Y.Li \& L.Nirenberg:} {\em The distance
function to the boundary, Finsler geometry and the singular set of
viscosity solutions of some Hamilton-Jacobi equations.} Comm. Pure
Appl. Math., {\bf 58} (2001), 85--146.

\bibitem{mantegazza} {\sc C.Mantegazza \& A.Mennucci:} {\em
Hamilton-Jacobi equation and distance function on Riemannian
manifolds.} Appl. Math. Opt., {\bf 47} (2003), 1--25.

\bibitem{mennucci} {\sc A.Mennucci:} {\em Regularity and variationality of
solutions to Hamilton-Jacobi equations. I. Regularity.} ESAIM
Control Optim. Calc. Var., {\bf 10} (2004), 426--451.

\bibitem{Pu} {\sc P.M.Pu}: {\em Some inequalities in certain nonorientable
Riemannian manifolds.}  Pacific J. Math., \textbf{2} (1952),
55--71.

\bibitem{Sa08} {\sc S.Sabourau}: {\em Asymptotic bounds for separating systoles
on surfaces.} Comment. Math. Helv., \textbf{83} (2008), no.~1,
35--54.

\bibitem{We} {\sc S.Wenger}: {\em Isoperimetric inequalities of Euclidean type
in metric spaces.}  Geom. Funct. Anal., {\bf 15} (2005), no.~2,
534--554.

\bibitem{We1} {\sc S.Wenger}: A short proof of Gromov's filling inequality.
Proceedings AMS, {\bf 136} (2008), 2937--2941.

\bibitem{white} {\sc B.White}: {\em A new proof of the compactness
theorem for integral currents.} Comment. Math. Helv., {\bf 64}
(1989), 207--220.

\bibitem{white1} {\sc B.White}: {\em Rectifiability of flat chains.}
Annals of Mathematics, {\bf 150} (1999), 165--184.

\bibitem{white2} {\sc B.White}: {\em The deformation theorem for flat
chains.} Acta Math. {\bf 183} (1999),  255--271.

\bibitem{Z} {\sc W.P. Ziemer}: {\em Integral currents ${\rm mod}$ $2$.}
Trans. Amer. Math. Soc., {\bf 105} (1962), 496--524.

\end{thebibliography}
\end{document}